\newcommand{\tuple}{\vec}
\newcommand {\A}{\mathfrak A}
\newcommand {\B}{\mathfrak B}
\newcommand {\M}{\mathfrak M}
\newcommand {\All}{\textbf{All}}
\newcommand{\FO}{\mathbf{FO}}
\newcommand{\D}{\mathbf D}
\newcommand{\NE}{\texttt{NE}}
\newcommand{\DD}{\mathcal D}
\newcommand{\Fld}{\texttt{Fld}}
\newcommand{\anon}{\mbox{\textUpsilon}}
\begin{document}
\title{Strongly First Order, Domain Independent Dependencies: the Union-Closed Case}
\titlerunning{The Union Closed Case}
% If the paper title is too long for the running head, you can set
% an abbreviated paper title here
%
\author{Pietro Galliani\orcidID{0000-0003-2544-5332}}
\authorrunning{P. Galliani}
% First names are abbreviated in the running head.
% If there are more than two authors, 'et al.' is used.
%
\institute{Free University of Bozen-Bolzano
\email{pietro.galliani@unibz.it}}
\maketitle              % typeset the header of the contribution
\begin{abstract}
Team Semantics generalizes Tarski's Semantics by defining satisfaction with respect to sets of assignments rather than with respect to single assignments. Because of this, it is possible to use Team Semantics to extend First Order Logic via new kinds of connectives or atoms -- most importantly, via \emph{dependency atoms} that express dependencies between different assignments. 

Some of these extensions are more expressive than First Order Logic proper, while others are reducible to it. In this work, I provide necessary and sufficient conditions for a dependency atom that is \emph{domain independent} (in the sense that its truth or falsity in a relation does not depend on the existence in the model of elements that do not occur in the relation) and \emph{union closed} (in the sense that whenever it is satisfied by all members of a family of relations it is also satisfied by their union) to be \emph{strongly first order}, in the sense that the logic obtained by adding them to First Order Logic is no more expressive than First Order Logic itself. 
\keywords{Team Semantics  \and Dependence Logic \and First Order Logic.}
\end{abstract}
\section{Introduction}
Team Semantics is a generalization of Tarskian Semantics in which formulas are satisfied or not satisfied by sets of assignments (called \emph{teams}) rather than by single assignments. This semantics was developed by Hodges in \cite{hodges97} as a compositional alternative to the game-theoretic semantics for Independence-Friendly Logic \cite{hintikkasandu89}, an extension of first order logic (related to \emph{Branching Quantifier Logic} \cite{henkin61}) that adds to it \emph{slashed quantifiers} $\exists x / \tuple y$, with the meaning of ``there exists some $x$, chosen independently from all variables in $\tuple y$, such that\ldots''. 

As observed by Jouko V\"a\"an\"anen, in Team Semantics this notion of dependence/independence between variables corresponds precisely to database-theoretic \emph{functional dependence} \cite{armstrong74}. This observation led to the development of \emph{Dependence Logic} \cite{vaananen07}, which separates the notion of dependency restriction from the notion of quantification by means of (functional) \emph{dependence atoms} $=\!\!(\tuple y, x)$ meaning ``the value of $x$ is \emph{determined by} the values of $\tuple y$''; and, later, to the development of an entire family of novel extensions of First Order Logic likewise obtained by adding to it new atoms to express forms of dependence/independence between variables. 

The properties of some of the logics thus generated are, by now, fairly well studied; but much is still not known about the general classification of the logics obtainable from First Order Logic in this manner. In particular, no full answer yet exists to the following question: which dependencies between variables, if added to First Order Logic as new atoms, generate a logic that is more expressive than First Order Logic, and which ones instead fail to do so?  Aside from being a natural starting point for a general classification of these extensions of First Order Logic, this question is of philosophical interest: after a fashion, a full answer to it would clarify how much we can get away with adding to First Order Logic (in terms of novel atoms specifying dependencies between variables) before the resulting logic becomes unrecoverably second-order in nature. 

This paper continues a series of works \cite{galliani2015upwards,galliani2016strongly,galliani2020safe,galliani2019characterizing,galliani2019nonjumping,galliani2021doubly} that attempt to solve this problem with respect to restricted classes of dependencies, as part of a program to eventually reach a general characterization. In particular, here we will focus on dependencies that have the two properties of being \emph{domain-independent} and \emph{union-closed}, which are shared by many of the dependencies of interest in the literature, and provide a full solution to the problem for their case.
\section{Preliminaries}
\label{sect:prelims}
%In this section I will recall the basic definitions of Team Semantics and some of the main known results concerning the characterization of strongly first order dependencies. 
\begin{definition}[Team]
Let $\M$ be a first order model with domain $M$ and let $V$ be a set of variables. Then a team over $\M$ with domain $V$ is a set of assignments $s: V \rightarrow M$. Given such a team $X$ and some tuple of variables $\tuple v = (v_1 \ldots v_k) \in V^k$, we will write $X(\tuple v)$ for the $|\tuple v|$-ary relation $\{s(\tuple v) : s \in X\} \subseteq M^{|v|}$, where $s(\tuple v)$ is the tuple $(s(v_1) \ldots s(v_k))$.  
\end{definition}
\begin{definition}[Team Semantics for First Order Logic]
	Let $\M$ be a first order model with domain $M$, let $\phi$ be a first order formula in Negation Normal Form\footnote{In this work we will assume that all expressions are in Negation Normal Form. Also, we will use the usual abbreviations of First Order Logic, such as $\tuple x = \tuple y$ for $\bigwedge_i (x_i = y_i)$ where $\tuple x = (x_1 \ldots x_k)$ and $\tuple y = (y_1 \ldots y_k)$ are tuples of variables of the same length.} over the signature of $\M$, and let $X$ be a team over $\M$ whose domain contains the free variables of $\phi$. Then we say that $X$ satisfies $\phi$ in $\M$, and we write $\M \models_X \phi$, if this follows from the following rules:
\begin{description}
    \item[TS-lit:] If $\phi$ is a first order literal, $\M \models_X \phi$ if and only if, for all assignments $s \in X$, $\M \models_s \phi$ in the usual sense of Tarskian Semantics; 
    \item[TS-$\vee$:] $\M \models_X \phi_1 \vee \phi_2$ if and only if $X = Y \cup Z$ for two $Y, Z \subseteq X$ such that $\M \models_Y \phi_1$ and $\M \models_Z \phi_2$;
    \item[TS-$\wedge$:] $\M \models_X \phi_1 \wedge \phi_2$ if and only if $\M \models_X \phi_1$ and $\M \models_X \phi_2$; 
    \item[TS-$\exists$:] $\M \models_X \exists v \psi$ if and only if there exists some function\footnote{$\mathcal P(M)$ represents the powerset $\{A : A \subseteq M\}$ of $M$.} $H: X \rightarrow \mathcal P(M) \backslash \{\emptyset\}$ such that $\M \models_{X[H/v]} \psi$, where $X[H/v] = \{s[m/v] : s \in X, m \in H(s)\}$;
    \item[TS-$\forall$:] $\M \models_X \forall v \psi$ if and only if $\M \models_{X[M/v]} \psi$, where $X[M/v] = \{s[m/v] : s \in X, m \in M\}$.
\end{description}
    A sentence $\phi$ is true in a model $\M$ if and only if $\M \models_{\{\epsilon\}} \phi$, where $\{\epsilon\}$ is the team containing only the empty assignment $\epsilon$. In that case, we write that $\M \models \phi$. 
\end{definition}
Over First Order Logic, Team Semantics reduces to Tarskian Semantics:
\begin{proposition}(\cite{vaananen07}, Corollary 3.32)
Let $\M$ be a first order model, let $\phi$ be a first order formula in Negation Normal Form over the signature of $\M$, and let $X$ be a team over $\M$ whose domain contains the free variables of $\phi$. Then $\M \models_X \phi$ if and only if, for all $s \in X$, $\M \models_s \phi$ in the sense of Tarskian Semantics. 

In particular, if $\phi$ is a first order sentence, $\M \models \phi$ in the sense of Team Semantics if and only if $\M \models \phi$ in the sense of Tarskian Semantics. 
	\label{propo:flat}
\end{proposition}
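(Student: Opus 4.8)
The plan is to prove the first claim by induction on the structure of the first order formula $\phi$ in Negation Normal Form, and then to read off the statement about sentences as an immediate corollary by taking $X = \{\epsilon\}$. The induction hypothesis is that for every subformula $\psi$ occurring strictly inside $\phi$ and every team $Y$ over $\M$ whose domain contains the free variables of $\psi$, we have $\M \models_Y \psi$ if and only if $\M \models_s \psi$ in the Tarskian sense for all $s \in Y$.

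For the base case, when $\phi$ is a first order literal the equivalence is exactly the clause \textbf{TS-lit}. The conjunction case is equally direct: by \textbf{TS-$\wedge$} and the induction hypothesis, $\M \models_X \phi_1 \wedge \phi_2$ holds iff every $s \in X$ Tarski-satisfies both conjuncts, which is the Tarskian condition for $\phi_1 \wedge \phi_2$. The disjunction case is the first requiring an explicit construction on both sides. If $\M \models_X \phi_1 \vee \phi_2$, then by \textbf{TS-$\vee$} we have $X = Y \cup Z$ with $\M \models_Y \phi_1$ and $\M \models_Z \phi_2$; by the induction hypothesis every $s \in Y$ Tarski-satisfies $\phi_1$ and every $s \in Z$ Tarski-satisfies $\phi_2$, so every $s \in X$ Tarski-satisfies $\phi_1 \vee \phi_2$. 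Conversely, if every $s \in X$ Tarski-satisfies $\phi_1 \vee \phi_2$, I would set $Y = \{s \in X : \M \models_s \phi_1\}$ and $Z = \{s \in X : \M \models_s \phi_2\}$, so that $X = Y \cup Z$ and, by the induction hypothesis, $\M \models_Y \phi_1$ and $\M \models_Z \phi_2$, whence $\M \models_X \phi_1 \vee \phi_2$.

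For $\exists v\,\psi$: if $\M \models_X \exists v\,\psi$, fix a witnessing $H : X \to \mathcal P(M) \setminus \{\emptyset\}$ with $\M \models_{X[H/v]} \psi$; by the induction hypothesis every assignment in $X[H/v]$ Tarski-satisfies $\psi$, and since each $H(s)$ is nonempty there is for every $s \in X$ some $m \in H(s)$ with $s[m/v] \in X[H/v]$, giving $\M \models_s \exists v\,\psi$. Conversely, choosing for each $s \in X$ a witness $m_s \in M$ with $\M \models_{s[m_s/v]} \psi$ and setting $H(s) = \{m_s\}$ makes $X[H/v] = \{s[m_s/v] : s \in X\}$ a team all of whose assignments Tarski-satisfy $\psi$, so by the induction hypothesis $\M \models_{X[H/v]} \psi$ and hence $\M \models_X \exists v\,\psi$. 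The universal case needs no choice: by \textbf{TS-$\forall$} and the induction hypothesis $\M \models_X \forall v\,\psi$ iff every assignment of $X[M/v] = \{s[m/v] : s \in X,\ m \in M\}$ Tarski-satisfies $\psi$, which is exactly the condition that every $s \in X$ Tarski-satisfies $\forall v\,\psi$. Finally, for a first order sentence $\phi$ the definition gives $\M \models \phi$ iff $\M \models_{\{\epsilon\}} \phi$, and the claim just proved turns this into Tarskian truth of $\phi$ in $\M$.

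I do not expect a genuine obstacle here: the argument is a routine structural induction, and the only clauses needing any care are the disjunction clause, where the splitting $X = Y \cup Z$ must be produced explicitly in both directions, and the forward half of the existential clause, where one uses that the sets $H(s)$ are nonempty, together with the harmless appeal to choice in its converse. The empty team requires no separate treatment, as it vacuously satisfies every first order formula under both semantics.
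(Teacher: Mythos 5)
Your proof is correct, and it is the standard structural induction by which this flatness result is established (the paper itself only cites V\"a\"an\"anen's Corollary 3.32 without reproving it). All the cases, including the explicit splitting for disjunction and the singleton choice function for the existential, are handled exactly as in the canonical argument.
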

However, the framework of Team Semantics allows one to extend First Order Logic in novel ways, for example via \emph{functional dependence atoms} \cite{vaananen07}, \emph{inclusion and exclusion atoms} \cite{galliani12}, \emph{anonymity atoms}  \cite{galliani12c,ronnholm2018arity,vaananenAnon}\footnote{Anonymity atoms were called "non-dependence atoms" and briefly studied in Section 4.6 of \cite{galliani12c}, in which their equidefinability with inclusion atoms was also proved; but their interpretation and their properties are discussed more in depth in \cite{ronnholm2018arity,vaananenAnon}.} or \emph{independence atoms} \cite{gradel13}
\begin{description}
\item[TS-$=\!\!(\cdot;\cdot)$:] If $\tuple x$ and $\tuple y$ are tuples of variables in the domain of $X$, $\M \models_X =\!\!(\tuple x; \tuple y)$ if and only if, for all $s, s' \in X$, $s(\tuple x) = s'(\tuple x) \Rightarrow s(\tuple y) = s'(\tuple y)$;
\item[TS-$\subseteq$:] If $\tuple x$ and $\tuple y$ are tuple of variables in the domain of $X$ of the same length, $\M \models_X \tuple x \subseteq \tuple y$ if and only if $X(\tuple x) \subseteq X(\tuple y)$;
\item[TS-$|$:] If $\tuple x$ and $\tuple y$ are tuple of variables in the domain of $X$ of the same length, $\M \models_X \tuple x \mid \tuple y$ if and only if $X(\tuple x) \cap X(\tuple y) = \emptyset$;
\item[TS-\anon:] If $\tuple x$ and $\tuple y$ are tuples of variables in the domain of $X$, $\M \models_X \tuple x \anon \tuple y$ if and only if for every $s \in X$ there exists some $s' \in X$ with $s(\tuple x) = s'(\tuple x)$ but $s(\tuple y) \not = s'(\tuple y)$
\item[TS-$\bot$:] If $\tuple x$ and $\tuple y$ are tuples of variables in the domain of $X$, $\M \models_X \tuple x \bot \tuple y$ if and only if $X(\tuple x \tuple y) = X(\tuple x) \times X(\tuple y)$, i.e., for all $s, s' \in X$ there is some $s'' \in X$ with $s''(\tuple x) = s(\tuple x)$ and $s''(\tuple y) = s'(\tuple y)$. 
\end{description}
The logics obtained by adding these dependencies to First Order Logic $\FO$ are all more expressive than it: in particular, (Functional) Dependence Logic $\FO(=\!\!(\cdot;\cdot))$, Exclusion Logic $\FO(|)$ and Independence Logic $\FO(\bot)$ are all equivalent to\footnote{In the sense that every sentence of any of these logics is equivalent to some Existential Second Order sentence, and vice versa. As we will see, it is however not the case that every \emph{formula} of $\FO(\bot)$ is equivalent to some formula of $\FO(|)$.} Existential Second Order Logic \cite{vaananen07,galliani12,gradel13,vaananen13}, while Inclusion Logic $\FO(\subseteq)$ and Anonimity Logic $\FO(\anon)$ are both equivalent to the positive fragment of Greatest Fixpoint Logic \cite{gallhella13,galliani12c}. On the other hand, the  \emph{nonemptiness atoms}
\begin{description}
\item[TS-$\NE$:] $\M \models_X \NE (\tuple v)$ if and only if\footnote{The choice of $\tuple v$ in the atom $\NE(\tuple v)$ is indifferent as long as it is contained in the domain of $X$. Thus, one could define $\NE$ as a single $0$-ary dependency instead of a family of dependencies of all arities, as we do here for simplicity and uniformity.} $X(\tuple v) \not = \emptyset$
\end{description}
do not increase the expressive power of First Order Logic if added to it, as per Theorem 11 of \cite{galliani2015upwards},\footnote{Theorem \ref{thm:upflat} in this work.} even though it follows easily from Proposition \ref{propo:flat} that such an atom is not definable by any first order formula over Team Semantics.\footnote{Of course there is a first order sentence $\exists \tuple x R \tuple x$ that says that $R$ is not empty; however, there is no first order formula $\phi(\tuple w)$ such that $\M \models_X \phi(\tuple w)$ if and only if $X(\tuple w) \not = \emptyset$.} 

Extensions of First Order Logic with Team Semantics via additional  connectives have also been studied. One such connective that will be of some importance in this work is the \emph{Global} (or \emph{Boolean}) \emph{Disjunction} 
\begin{description}
\item[TS-$\sqcup$:] $\M \models_X \phi \sqcup \psi$ if and only if $\M \models_X \phi$ or $\M \models_X \psi$.
\end{description}

There is by now a considerable amount of research, that cannot be summarized here, concerning the classification of logics based on Team Semantics. One useful tool for such research has been the following general definition of dependency \cite{kuusisto2015double}:
\begin{definition}[Dependency]
Let $k \in \mathbb N$. Then a $k$-ary dependency $\D$ is a class, closed under isomorphisms, of models $\A= (A, R)$, where $R$ is a $k$-ary relation over $A$. Given a first order model $\M$ with domain $M$, a team $X$ over it and a tuple $\tuple x$ of variables in the domain of $X$ such that $|\tuple x| = k$, 
\begin{description}
\item[TS-D:] $\M \models_X \D \tuple x$ if and only if $(M, X(\tuple x)) \in \D$. 
\end{description}
Given any such dependency $\D$, we write $\FO(\D)$ for the logic obtained by adding $\D$ to First Order Logic with Team Semantics; and likewise, if $\DD$ is a family of such dependencies we write $\FO(\DD)$ for the logic obtained by adding all dependencies in $\DD$ to First Order Logic with Team Semantics. 
\end{definition}

This definition is somewhat more general than the examples given so far in that it allows an instance $\D \tuple x$ of a dependency $\D$ to be satisfied or not satisfied by a team $X$ over a model $\M$ depending on the existence in $\M$ of elements that do not occur in $X(\tuple x)$. In other words, all dependencies mentioned so far (and nearly all other dependencies considered in the literature) are \emph{domain independent} in the sense of the following definition from \cite{kontinen2016decidable}:\footnote{In \cite{kontinen2016decidable}, this property is called "Universe Independence".}
\begin{definition}[Domain Independence]
A $k$-ary dependency $\D$ is \emph{domain independent} if and only if, for all models $\A = (A, R)$, 
\[
    (A, R) \in \D \Leftrightarrow (\Fld(R), R) \in \D
\]
where $\Fld(R) = \bigcup_{i=1}^k \{\tuple a_i : \tuple a \in R\}$ is the set of all elements that occur in an position of any tuple of $R$. 
\end{definition}
An example of a dependency that is not domain-independent is given by the \emph{Totality Atoms} from \cite{galliani2015upwards}
\begin{description}
\item[TS-$\All$:] $\M \models_X \All (\tuple x)$ if and only if $X(\tuple x)= M^{|\tuple x|}$
\end{description}
that state that a certain tuple of variables $\tuple x$ takes \emph{all} possible values in the current team. Domain independence is a very natural condition that is satisfied by the majority of the dependencies studied in the literature, and it can be argued that it should be included in the definition of a "proper" dependency. 

All dependencies mentioned so far, including totality, are also \emph{First Order} in the sense that, if seen as properties of relations, they are first order definable: 
\begin{definition}[First Order Dependency]
A $k$-ary dependency $\D$ is First Order if and only if there exists a first order sentence $\D(R)$ such that 
\[
    (A, R) \in \D \Leftrightarrow (A, R) \models \D(R)
\]
for all models $\A = (A, R)$, where $R$ is a $k$-ary relation.
\label{def:fodep}
\end{definition}
Ultimately, the fact that adding first order dependencies to First Order Logic may lead to logics more expressive than First Order Logic itself is a consequence of the second-order nature of the Team Semantics rules $\textbf{TS-$\vee$}$ and $\textbf{TS-$\exists$}$ for disjunction and existential quantification.\footnote{The \textbf{FOT} logic of \cite{kontinen2019logics}, which is a Team Semantics-based logic with no  higher order quantifications in the rules for its connectives, is no stronger than First Order Logic and can define all first order dependencies.}

There are some general classes of dependencies that have also been useful in the study of Team Semantics: 
\begin{definition}[Empty Team Property, Downwards, Union- and Upwards Closures]

Let $\D$ be a $k$-ary dependency. Then $\D$
\begin{itemize}
    \item has the \emph{Empty Team Property} if $(A, \emptyset) \in \D$ for all domains $A$;
    \item is \emph{Downwards Closed} if whenever $(A, R) \in \D$ and $S \subseteq R$ we also have that $(A, S) \in \D$; 
    \item is \emph{Union Closed} if whenever $(R_i)_{i \in I}$ is a family of relations over $A$ such that $(A, R_i) \in \D$ for all $i \in I$ then $\left(A, \bigcup_i R_i\right) \in \D$ as well; 
    \item is \emph{Upwards Closed} if whenever $(A, R) \in \D$ and $R \subseteq S$ then $(A, S) \in \D$. 
\end{itemize}
\end{definition}
Note that union closure implies the empty team property, because for $I = \emptyset$ we have that $\bigcup_{i \in I} R_i = \emptyset$.

Table \ref{tab:tab1} summarizes the properties of the dependencies mentioned so far.\footnote{$\NE$ and $\All$ are not union closed because the union of the empty family of relations is the empty relation, which does not satisfy these dependencies. If we restricted the definition of union closure to nonempty families $I$, they would be union closed.}.
\begin{table}[]
    \centering
        \caption{Closure Properties of Dependencies}
    \begin{tabular}{c|c c c c c}
         Dependency & Empty Team & ~~~Downwards~~~ & ~~~Union~~~ & ~~~Upwards~~~ & ~~~Domain Ind.~~~\\
         \hline 
         $=\!\!(\cdot; \cdot)$ & $+$& $+$ & $-$  & $-$ & $+$\\
         $\subseteq$ & $+$ & $-$ & $+$ & $-$ & $+$\\
         $|$ & $+$ & $+$ & $-$ & $-$ & $+$\\
         $\anon$ & $+$ & $-$ & $+$ & $-$ & $+$\\
         $\bot$ & $+$ & $-$ & $-$ & $-$ & $+$\\
         $\NE$ & $-$ & $-$ & $-$ & $+$ & $+$\\
         $\All$ & $-$ & $-$ & $-$ & $+$ & $-$
    \end{tabular}
    \label{tab:tab1}
\end{table}

It can be seen, by structural induction on formulas, that if $\DD$ is a family of downwards closed dependencies then all formulas $\phi \in \FO(\DD)$ are downwards closed, in the sense that if $\M \models_X \phi$ and $Y \subseteq X$ then $\M \models_Y \phi$. Likewise, if all dependencies in $\DD$ are union closed and $\M \models_{X_i} \phi \in \FO(\DD)$ for all $i \in I$, where $(X_i)_{i \in I}$ is a family of teams over $\M$ with the same domain, then $\M \models_{\bigcup_{i \in I} X_i} \phi$; and if all dependencies in $\DD$ have the empty team property then $\M \models_\emptyset \phi$ for all models $\M$ and all formulas $\phi \in \FO(\DD)$ over the signature of $\M$. This implies at once that, for example, independence atoms are not definable in terms of exclusion atoms: even though every $\FO(\bot)$ sentence is equivalent to some $\FO(|)$ sentence and vice versa, there exists no $\FO(|)$ formula that is equivalent to the independence atom $\tuple x \bot \tuple y$. This discrepancy between definability and expressivity in Team Semantics -- i.e., the fact that logics based on Team Semantics may be equiexpressive despite the corresponding atoms not being reciprocally definable -- is one of the most intriguing phenomena of Team Semantics. 

The property of upwards closure, instead, is not preserved by Team Semantics as first order literals are not upwards closed. However,
\begin{theorem}[\cite{galliani2015upwards}, Theorem 11]
Let $\DD$ be a family of upwards closed, first order dependencies. Then every sentence of $\FO(\DD)$ is equivalent to some first order sentence. 
\label{thm:upflat}
\end{theorem}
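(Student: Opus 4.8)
The plan is to prove, by induction on the structure of the formula, a statement stronger than the theorem: one about all \emph{formulas} of $\FO(\DD)$, not only sentences. Fix $\phi \in \FO(\DD)$, let $\tuple v$ be a tuple of variables listing a superset of its free variables, and introduce a fresh relation symbol $P$ of arity $|\tuple v|$. I would show that there exist a first-order formula $\alpha_\phi(\tuple v)$ over the signature $\sigma$ of the models in question and a first-order sentence $\mu_\phi$ over $\sigma \cup \{P\}$ that is \emph{positive in $P$} (hence monotone in $P$) such that, for every model $\M$ and every team $X$ whose domain contains $\tuple v$,
\[
\M \models_X \phi \quad\Longleftrightarrow\quad \bigl(\M \models \alpha_\phi(\tuple a) \text{ for every } \tuple a \in X(\tuple v)\bigr)\ \text{ and }\ (\M, X(\tuple v)) \models \mu_\phi,
\]
where on the right $P$ is interpreted as $X(\tuple v)$. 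Here $\alpha_\phi$ records the downward-closed, ``first-order literal'' content of $\phi$, while $\mu_\phi$ records its upward-closed, ``dependency'' content; a single monotone first-order condition on $X(\tuple v)$ cannot suffice precisely because first-order literals are downward but not upward closed, which is why the invariant must carry the two pieces separately. Granting the claim, the theorem is immediate: for a sentence $\phi$ the tuple $\tuple v$ is empty, $P$ is a $0$-ary symbol, and evaluating at the team $\{\epsilon\}$ (so that $P$ is interpreted as ``true'') gives $\M \models \phi$ iff $\M \models \alpha_\phi \wedge \mu_\phi[\top/P]$, a first-order sentence over $\sigma$.

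For the base cases: if $\phi$ is a first-order literal, set $\alpha_\phi := \phi$ and $\mu_\phi := \top$. If $\phi$ is a dependency atom $\D\tuple x$ with $\D \in \DD$, set $\alpha_\phi := \top$; here both hypotheses on $\D$ are used. Since $\D$ is first order there is a first-order sentence $\D(R)$ defining it (Definition~\ref{def:fodep}); since $\D$ is upward closed, $\D(R)$ is monotone in $R$, so by Lyndon's preservation theorem for monotone formulas $\D(R)$ is equivalent to a sentence $\widehat{\D}(R)$ that is positive in $R$. Now $\M \models_X \D\tuple x$ holds iff $(M, X(\tuple x)) \models \widehat{\D}(R)$, and $X(\tuple x)$ is obtained from $X(\tuple v)$ by a fixed rearrangement and projection of coordinates; hence, replacing in $\widehat{\D}(R)$ each (positive) atom $R(\tuple t)$ by a first-order formula of the form $\exists \tuple u\,(P\tuple u \wedge \dots)$ expressing that $\tuple t$ lies in the corresponding projection of $P$, one obtains a sentence $\mu_{\D\tuple x}$ over $\{P\}$ that is positive in $P$ and has the required property. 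Turning an upward-closed first-order dependency into a \emph{positive} first-order condition on the team relation is the only place where the two hypotheses of the theorem are genuinely needed.

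The inductive steps go as follows. For $\phi_1 \wedge \phi_2$ take $\alpha := \alpha_{\phi_1} \wedge \alpha_{\phi_2}$ and $\mu := \mu_{\phi_1} \wedge \mu_{\phi_2}$ (a conjunction of $P$-positive sentences is $P$-positive). For $\forall w\,\psi$ the team becomes $X[M/w]$, whose relevant projection is $X(\tuple v) \times M$, so one takes $\alpha := \forall w\,\alpha_\psi$ and obtains $\mu$ from $\mu_\psi$ by substituting $P(\tuple t)$ for every atom $P'(\tuple t, u)$ involving the fresh $(|\tuple v|+1)$-ary symbol $P'$ used for $\psi$; positivity is preserved. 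For $\exists w\,\psi$ one shows $\M \models_X \exists w\,\psi$ iff every $\tuple a \in X(\tuple v)$ satisfies $\exists w\,\alpha_\psi$ and $\mu_\psi$ holds of the relation $\{(\tuple a,b) : \tuple a \in X(\tuple v),\ \M \models \alpha_\psi(\tuple a, b)\}$: the key point is that, since $\mu_\psi$ is monotone, the pointwise-largest admissible witness function $H$ is optimal, and it produces exactly that relation; then $\alpha$ and $\mu$ are obtained as in the universal case, relativised by $\alpha_\psi$, and stay $P$-positive. The disjunction $\phi_1 \vee \phi_2$ is handled the same way: a split $X = Y \cup Z$ with $\M \models_Y \phi_1$ and $\M \models_Z \phi_2$ exists iff every $\tuple a \in X(\tuple v)$ satisfies $\alpha_{\phi_1} \vee \alpha_{\phi_2}$ and, for $i = 1,2$, the sentence $\mu_{\phi_i}$ holds of $X(\tuple v) \cap \{\tuple a : \M \models \alpha_{\phi_i}(\tuple a)\}$, because monotonicity of $\mu_{\phi_i}$ lets us use the largest admissible subteams $Y := \{s \in X : \M \models \alpha_{\phi_1}(s(\tuple v))\}$ and $Z := \{s \in X : \M \models \alpha_{\phi_2}(s(\tuple v))\}$; one then sets $\alpha := \alpha_{\phi_1} \vee \alpha_{\phi_2}$ and $\mu := \mu_{\phi_1}^{\alpha_{\phi_1}} \wedge \mu_{\phi_2}^{\alpha_{\phi_2}}$, where $\mu_{\phi_i}^{\alpha_{\phi_i}}$ is $\mu_{\phi_i}$ with every atom $P\tuple t$ replaced by $P\tuple t \wedge \alpha_{\phi_i}(\tuple t)$, still positive in $P$.

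I expect the delicate points to be, first, finding the right induction hypothesis — in particular, realising that no single monotone condition can work and that the literal content must be split off into $\alpha_\phi$ — and, second, the disjunction and existential cases, where one must argue that monotonicity of the $\mu$-components makes the ``greedy'' choice of subteams (respectively, of the witness function) optimal, so that the existence of \emph{some} successful choice reduces to the success of one canonical choice. The dependency-atom base case, by contrast, is short once Lyndon's theorem is invoked, but since it is the sole point at which upward closure and first-order definability are exploited, it is worth isolating cleanly.
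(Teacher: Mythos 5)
The paper only imports this theorem from \cite{galliani2015upwards} and contains no proof of it, so there is no in-text argument to compare yours against; judged on its own, your proof is correct and self-contained. The two-part invariant --- a flat first-order formula $\alpha_\phi$ that every tuple of $X(\tuple v)$ must satisfy, paired with a monotone first-order sentence $\mu_\phi$ about the relation $X(\tuple v)$ --- does survive every inductive case: the greedy choices (largest admissible subteams for $\vee$, pointwise-largest admissible witness function for $\exists$) are optimal exactly because the $\mu$-components are monotone in $P$, and the relativizations $P\tuple t \mapsto P\tuple t \wedge \alpha(\tuple t)$ preserve both first-orderness and monotonicity; the base case for $\D\tuple x$ is where first-orderness and upward closure of $\D$ enter, as you say. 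Two remarks. First, the appeal to Lyndon's preservation theorem is dispensable: what your induction actually needs is that $\mu_\phi$ be first-order and \emph{semantically} monotone in $P$, and both properties are already guaranteed at the atoms (first-orderness by Definition~\ref{def:fodep}, monotonicity by upward closure of $\D$ composed with the monotone projection from $P$ to $R$) and preserved by all your constructions, so syntactic positivity never has to be manufactured. Second, the proof in the cited source is organized differently --- it works through a normal form for $\FO(\DD)$ formulas in which the upwards closed atoms are isolated and then replaced by maximal, first-order definable witnesses --- but the engine is the same maximality-via-upward-closure observation that drives your $\vee$ and $\exists$ cases, so your compositional version is a legitimate and arguably more transparent alternative route to the same result.
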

%This results highlights an asymmetry in Team Semantics: while certain downwards-closed first order dependencies, like functional dependencies, can bring the expressive power of their corresponding logics well beyond that of First Order Logic, no upwards-closed first order dependency can do so and neither can any collection of upwards closed first order dependencies. 
In other words, the set $\DD^\uparrow$ of all upwards closed first order dependencies is \emph{strongly first order} in the sense of the following definition: 
\begin{definition}[Strongly First Order Dependencies]
Let $\D$ be a dependency [let $\DD$ be a family of dependencies]. Then $\D$ [$\DD$] is strongly first order if and only if every sentence of $\FO(\D)$ [$\FO(\DD)$] is equivalent to some first order sentence.  
\end{definition}
It is easy to see that all strongly first order dependencies are first order in the sense of Definition \ref{def:fodep}, since by the rules of Team Semantics $(M, R) \in \D$ if and only if $(M, R) \models \forall \tuple x (\lnot R \tuple x \vee (R \tuple x \wedge \D \tuple x))$ and if $\D$ is strongly first order this is equivalent to some first order sentence; but this is certainly no sufficient condition, since as we saw there exist first order dependencies that are not strongly first order. 

On the other hand, being first order and upwards closed is a sufficient condition for a dependency to be strongly first order, but it is not necessary: for example, as shown already in Section 3.2 of \cite{galliani12}, \emph{constancy atoms}
\begin{description}
\item[TS-$=\!\!(\cdot)$:] $\M \models_X =\!\!(\tuple x)$ if and only if for all $s, s' \in X$, $s(\tuple x) = s'(\tuple x)$,
\end{description}
that can be seen as degenerate cases of functional dependencies stating that $\tuple x$ depends on \emph{nothing} (i.e. its value is determined by the empty tuple of variables), are strongly first order despite not being upwards closed.\footnote{We could consider only \emph{unary} constancy atoms, as $=\!\!(\tuple x) \equiv \bigwedge_{x \in \tuple x} =\!\!(x)$; but, similarly to the $\NE$ case, for simplicity and uniformity we define constancy atoms for all arities.} We can say more: 
\begin{proposition}
Every sentence of $\FO(=\!\!(\cdot), \DD^\uparrow, \sqcup)$ -- i.e. of First Order Logic augmented with constancy atoms, upwards closed first order dependencies, and global disjunctions -- is equivalent to some first order sentence. 
\label{propo:up_const_global}
\end{proposition}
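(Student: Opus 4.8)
The plan is to reduce the statement to Theorem~\ref{thm:upflat} in three stages. First, I would eliminate the global disjunction by pushing every occurrence of $\sqcup$ to the top of the formula. Inspecting the semantic clauses shows that $\sqcup$ commutes with all the other connectives: $(\alpha \sqcup \beta) \vee \gamma \equiv (\alpha \vee \gamma) \sqcup (\beta \vee \gamma)$, $(\alpha \sqcup \beta) \wedge \gamma \equiv (\alpha \wedge \gamma) \sqcup (\beta \wedge \gamma)$, and $Q v\,(\alpha \sqcup \beta) \equiv (Q v\,\alpha) \sqcup (Q v\,\beta)$ for $Q \in \{\exists, \forall\}$ (symmetrically on the other side, $\vee$ and $\wedge$ being commutative in team semantics). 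Each equivalence is just the observation that ``there is a witness for which $A$ or $B$ holds'' coincides with ``there is a witness for which $A$, or there is a witness for which $B$'', applied to the existential quantifications over decompositions $X = Y \cup Z$ and over choice functions $H$ built into the clauses for $\vee$ and $\exists$ (and trivially in the cases of $\wedge$ and $\forall$). Since no dependency atom contains $\sqcup$, iterating these equivalences, together with associativity and commutativity of $\sqcup$, rewrites the given sentence as $\bigsqcup_i \Phi_i$ with each $\Phi_i \in \FO(=\!\!(\cdot), \DD^\uparrow)$; it then suffices to prove each $\Phi_i$ equivalent to a first order sentence $\rho_i$, since the whole sentence holds in $\M$ exactly when some $\Phi_i$ does, hence is equivalent to the ordinary first order disjunction $\bigvee_i \rho_i$.

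Next, I would hoist every constancy atom out to the front of each $\Phi_i$. The key lemma is that if $\Phi$ has a distinguished occurrence of a subformula $=\!\!(\tuple x)$ and $\tuple z$ is a tuple of fresh variables with $|\tuple z| = |\tuple x|$, then $\Phi \equiv \exists \tuple z\,(=\!\!(\tuple z) \wedge \Phi')$, where $\Phi'$ is $\Phi$ with that occurrence replaced by the first order literal $\tuple x = \tuple z$. To prove it I would show that the unary operator $\exists \tuple z\,(=\!\!(\tuple z) \wedge -)$ -- which over any team $X$ amounts to ``choose a single tuple $\tuple m$ and pass to $X[\tuple m/\tuple z]$'' -- commutes with $\wedge$, $\vee$, $\exists$ and $\forall$ as long as $\tuple z$ does not occur in the formula: for $\wedge$ and $\vee$ one pushes it into the branch containing the distinguished occurrence and pulls it out of the other (using that $\tuple z$ is not free there and, for $\vee$, that every subteam of a $\tuple z$-constant team is $\tuple z$-constant), and for $\exists$ and $\forall$ one uses $X[\tuple m/\tuple z][M/v] = X[M/v][\tuple m/\tuple z]$ together with the analogous identity for choice functions. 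Pushed all the way down to the occurrence, the operator meets $\exists \tuple z\,(=\!\!(\tuple z) \wedge \tuple x = \tuple z)$, which holds on $X$ iff $X$ is constant on $\tuple x$, that is, is equivalent to $=\!\!(\tuple x)$. Applying the lemma to the finitely many constancy atoms of $\Phi_i$ one at a time yields $\Phi_i \equiv \exists \tuple z_1\,(=\!\!(\tuple z_1) \wedge \cdots \exists \tuple z_n\,(=\!\!(\tuple z_n) \wedge \theta_i)\cdots)$ with $\theta_i \in \FO(\DD^\uparrow)$.

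Finally, I would read off the first order sentence. The evaluation of a sentence starts from the nonempty team $\{\epsilon\}$, and on a nonempty team $\exists \tuple z\,(=\!\!(\tuple z) \wedge \chi)$ holds iff $\chi$ holds after replacing $\tuple z$ throughout by a single tuple of elements; hence $\M \models \Phi_i$ iff there are tuples $\tuple m_1, \dots, \tuple m_n$ in $M$ with $\M \models_{\{\epsilon[\tuple m_1/\tuple z_1]\cdots[\tuple m_n/\tuple z_n]\}} \theta_i$. By the standard substitution lemma this is equivalent to $(\M, \tuple m_1, \dots, \tuple m_n) \models \theta_i[\tuple c_1/\tuple z_1, \dots, \tuple c_n/\tuple z_n]$, a sentence of $\FO(\DD^\uparrow)$ over the signature expanded with fresh constants $\tuple c_1, \dots, \tuple c_n$; by Theorem~\ref{thm:upflat} it is equivalent to some first order sentence $\psi_i(\tuple c_1, \dots, \tuple c_n)$, so $\M \models \Phi_i$ iff $\M \models \exists \tuple z_1 \cdots \exists \tuple z_n\, \psi_i(\tuple z_1, \dots, \tuple z_n)$. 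Taking this first order sentence as $\rho_i$ and combining with the first stage completes the argument.

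I expect the main obstacle to be the commutation lemma of the second stage: although conceptually straightforward, it requires careful bookkeeping of the fresh variables, and the degenerate cases in which a subteam becomes empty must be handled so that, for instance, a $\NE$ atom occurring inside $\theta_i$ is treated correctly and the iterated hoisting does not re-process the constancy atoms already moved to the front.
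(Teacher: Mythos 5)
Your proof is correct, but it takes a genuinely different route from the paper. The paper's own proof is a two-line appeal to prior work: it cites Theorem 21 of \cite{galliani2015upwards} (that $\DD^\uparrow \cup =\!\!(\cdot)$ is strongly first order) and Proposition 14 of \cite{galliani2019nonjumping} (that adding $\sqcup$ to a strongly first order family preserves strong first-orderness), and concludes. What you have written is, in effect, a self-contained reconstruction of the proofs of both cited results: your first stage (distributing $\sqcup$ over $\vee$, $\wedge$, $\exists$, $\forall$ to reach a global-disjunctive normal form $\bigsqcup_i \Phi_i$) is essentially the content of the second citation, and your second and third stages (hoisting constancy atoms to a prefix $\exists \tuple z\,(=\!\!(\tuple z) \wedge \cdots)$, then trading the hoisted variables for fresh constants and invoking Theorem~\ref{thm:upflat}) is essentially how the first citation is proved. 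The two nontrivial points in your argument both check out: the commutation of $\exists \tuple z\,(=\!\!(\tuple z)\wedge -)$ with $\vee$ rests on the locality of team semantics (satisfaction depends only on the team restricted to the free variables, which holds for the lax semantics and arbitrary dependency atoms of the form \textbf{TS-D}) together with the bijection between subteams of $X$ and subteams of $X[\tuple m/\tuple z]$; and the final substitution step is legitimate because Theorem~\ref{thm:upflat} applies over any signature, including one expanded with the fresh constants $\tuple c_1,\dots,\tuple c_n$. What your approach buys is independence from the two external references at the cost of the bookkeeping you yourself identify; what the paper's approach buys is brevity and reuse of results needed elsewhere anyway. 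The worry you raise about $\NE$ atoms is not an actual obstacle: the empty-subteam cases in the $\vee$ commutation work out on both sides, since $M \neq \emptyset$ guarantees a witness tuple $\tuple m$ exists even for the empty team.
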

\begin{proof}
By Theorem 21 of \cite{galliani2015upwards}, $\DD^\uparrow \cup =\!\!(\cdot)$ is strongly first order. By Proposition 14 of \cite{galliani2019nonjumping}, if $\DD$ is a strongly first order family of dependencies, every sentence of $\FO(\DD, \sqcup)$ is equivalent to some first order sentence. The result follows at once. 
\end{proof}
These observations led to the following 

\noindent \textbf{Problem:} Provide necessary and sufficient conditions for a dependency or a family of dependencies to be strongly first order. 

and to the related 

\noindent \textbf{Conjecture:} A family of dependencies $\DD$ is strongly first order if and only if all $\D \in \DD$ are strongly first order. 

A partial answer was found in \cite{galliani2019characterizing}:
\begin{theorem}[\cite{galliani2019characterizing}, Theorem 4.5]
Let $\D$ be a downwards closed, domain independent\footnote{The result of \cite{galliani2019characterizing} actually requires a weaker condition than domain independence called \emph{relativizability}. This property is implied by domain independence, and in this work we will focus on domain independent dependencies. Totality $\All$ is an example of a dependency that is relativizable but not domain independent.} dependency with the empty team property. Then $\D$ is strongly first order if and only if it is definable in Constancy Logic $\FO(=\!\!(\cdot))$
\end{theorem}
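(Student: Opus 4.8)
The statement is an equivalence whose two directions are of very different difficulty. For the \emph{if} direction, suppose the atom $\D\tuple x$ is uniformly equivalent to a fixed constancy-logic formula $\delta(\tuple x)\in\FO(\const)$; then, given any sentence $\Phi\in\FO(\D)$, I would replace each occurrence of an atom $\D\tuple t$ in $\Phi$ by $\delta(\tuple t)$. Since Team Semantics is compositional, the resulting sentence $\Phi'\in\FO(\const)$ is equivalent to $\Phi$; and since the family $\DD^\uparrow\cup\{\const\}$ is strongly first order (Theorem~21 of \cite{galliani2015upwards}), so is its subfamily $\{\const\}$ — a subfamily of a strongly first order family is trivially strongly first order. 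Hence $\Phi'$, and with it $\Phi$, is equivalent to a first order sentence, so $\D$ is strongly first order.

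For the \emph{only if} direction, assume $\D$ is strongly first order; as noted in the excerpt, $\D$ is then first order, so I fix a first order sentence defining it. The first step is to extract a \emph{forbidden-pattern normal form}. Since $\D$ is downward closed, $\lnot\D$ is monotone in $R$, so a Lyndon-style positivity-preservation argument makes $\lnot\D$ equivalent to a first order sentence positive in $R$; domain independence then forbids any genuine quantification over elements outside $\Fld(R)$, which collapses that sentence to one asserting that, among boundedly many of its tuples, $R$ realizes one of finitely many fixed equality-patterns. Dually: there are finitely many ``forbidden patterns'' $P_1,\dots,P_r$, relations of bounded size up to isomorphism, such that $(A,R)\in\D$ iff $R$ contains no sub-relation isomorphic to any $P_i$. (This step uses only that $\D$ is first order, downward closed and domain independent; it does not yet use strong first-orderness.)

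The second step pins down what strong first-orderness must then provide. By hoisting constancy atoms past $\wedge,\vee,\exists,\forall$ and first order literals — using flatness of first order subformulas at each step — every $\phi(\tuple x)\in\FO(\const)$ is equivalent to one of the shape $\exists\tuple c\,(\const[\tuple c]\wedge\psi(\tuple x,\tuple c))$ with $\psi$ first order; hence $\D$ is definable in $\FO(\const)$ precisely when it admits a \emph{dual} description, namely: there are a bound $l$ and a set $\Pi$ of equality-types of $(k{+}l)$-tuples such that $(A,R)\in\D$ iff one can choose ``reserved'' elements $\tuple a\in A^l$ with $\mathrm{eqtype}(\tuple b,\tuple a)\in\Pi$ for every $\tuple b\in R$. (For instance ``$|R|\le n$'' and ``all tuples of $R$ agree on their first coordinate'' have such descriptions, whereas functional dependence and ``$R$ is antisymmetric'' do not, since each has a forbidden pattern linking two of its tuples by a coincidence that no fixed reserved set can mediate.) I would then argue contrapositively: if $\D$ has no dual description, some $P_i$ genuinely links two of its tuples, and I would use this to \emph{simulate a binary dependence atom} $\const[\cdot;\cdot]$ inside $\FO(\D)$ — the key observation being that for a team $X$ in variables $x,z$ the ``collision relation'' $S=\{(c,c'):\exists a\ (a,c),(a,c')\in X(x,z)\}$ is symmetric and fits the relevant pattern exactly when $X(x,z)$ is a partial function (for antisymmetry: $S$ antisymmetric iff $S$ lies on the diagonal iff $X(x,z)$ is functional), and $S$ can be realized as a projection of a team obtained from $X$ by quantification. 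Since $\FO(\const[\cdot;\cdot])$ expresses non-elementary properties (e.g.\ that the domain is infinite), such a simulation would contradict strong first-orderness of $\D$; therefore the dual description exists, and by the equivalence above, $\D$ is definable in $\FO(\const)$.

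The main obstacle, I expect, is the simulation in the last step. The delicate point is that Team Semantics' $\exists$ makes only an \emph{existential} choice of witnesses, so one cannot directly force a quantified relation to \emph{contain} the collision relation $S$; I anticipate this is handled by carrying out the argument at the level of sentences, where the relevant team is derived from the whole model and the universal quantifier is freely available, together with a carefully designed first order scaffolding around the dependency atom, and by a clean extraction of the ``genuinely relational'' forbidden pattern from the bare failure of the dual criterion (the arity bookkeeping being a further technicality). The forbidden-pattern normal form and the constancy-logic normal form, by contrast, amount to routine applications of standard model-theoretic techniques.
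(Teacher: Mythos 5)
The \emph{if} direction of your proposal is fine. The problem is the \emph{only if} direction, where the load-bearing step --- ``if $\D$ has no dual description, some forbidden pattern genuinely links two of its tuples, and I can use this to simulate $=\!\!(\cdot;\cdot)$ inside $\FO(\D)$'' --- is asserted rather than proved, and you yourself flag that you do not see how to carry out the simulation. Both halves of that step are genuine gaps. First, you give no argument that failure of constancy-definability produces a single ``linking'' pattern in a form usable for an interpretation; the passage from ``no bound $l$ and type-set $\Pi$ work'' to ``one specific $P_i$ couples two tuples through a coincidence no reserved set can mediate'' is exactly the combinatorial heart of the theorem, not a corollary of the normal forms. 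Second, even granting such a pattern, the simulation of functional dependence is only sketched for one hand-picked example (antisymmetry), and the obstruction you name is real: \textbf{TS-$\exists$} only lets you choose witnesses existentially, so you cannot force a definable team to \emph{contain} your collision relation, and nothing in your sketch shows how the ``scaffolding'' overcomes this for an arbitrary linking pattern. There is also no reason to believe a priori that every non-constancy-definable $\D$ of this kind interprets the full dependence atom; the theorem only asserts that $\FO(\D)$ exceeds $\FO$, which is a much weaker conclusion, and the known proof does not establish failure of strong first-orderness by interpreting $=\!\!(\cdot;\cdot)$. Your Step 1 (forbidden-pattern normal form via Lyndon plus domain independence) is plausible but also under-justified: positivity in $R$ alone does not collapse a sentence to ``contains one of finitely many bounded patterns''; you need to combine upward closure of $\lnot\D$ with the fact that domain independence makes $\lnot\D$ preserved under arbitrary extensions, hence existential by \L o\'s--Tarski, before you can extract bounded patterns.

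For comparison: this theorem is imported here from \cite{galliani2019characterizing} and not reproved, but the argument there (and the machinery of Section 3 of the present paper, which generalizes it to the union-closed case) goes a different way. One does not reduce to the dependence atom; instead one proves a \emph{local} characterization --- for every $(A,R)\in\D$ there is a sentence of the form $\exists \tuple x\,\forall \tuple y\,(R\tuple y\rightarrow\theta(\tuple x,\tuple y))$ (an $R$-negative special case of the U-sentences of this paper) that is true in $(A,R)$ and entails $\D(R)$ --- by a compactness-and-saturation argument whose engine is a forbidden elementary-chain/step configuration in the style of Lemma \ref{lemma:nochain} and Proposition \ref{propo:nostep}. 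Compactness then upgrades this to a finite disjunction of such sentences, each of which translates directly into the constancy-logic formula $\exists\tuple x\,(=\!\!(\tuple x)\wedge\theta(\tuple x,\tuple w))$, with the outer disjunction absorbed because global disjunction is definable in $\FO(=\!\!(\cdot))$. If you want to salvage your approach, the elementary-extension argument is the ingredient you are missing: it is what converts ``$\D$ is strongly first order'' into a usable structural constraint, in place of the unproven interpretation of $=\!\!(\cdot;\cdot)$.
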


In \cite{galliani2019nonjumping}, a characterization was found for the strongly first order dependencies that are domain independent\footnote{Again, relativizability suffices for this result.} and have the (somewhat technical) property of being \emph{non-jumping}, which in particular is true of strongly first order downwards closed dependencies (even of those without the empty team property): 
\begin{theorem}[\cite{galliani2019nonjumping}, Corollary 31]
Let $\D$ be a non-jumping (or, in particular, downwards closed), domain independent\footnote{Or relativizable.}, strongly first order dependency. Then $\D$ is definable in $\FO(=\!\!(\cdot), \DD^\uparrow, \sqcup)$. 
\label{thm:nonjumping}
\end{theorem}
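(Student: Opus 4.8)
The aim is to produce a formula $\phi_\D(\tuple x)$ of $\FO(=\!\!(\cdot),\DD^\uparrow,\sqcup)$ with $\M\models_X\phi_\D(\tuple x)\iff(M,X(\tuple x))\in\D$ for every model $\M$ and every team $X$ whose domain contains $\tuple x$. I would start from the only concrete thing strong first-orderness hands us: since $\D$ is strongly first order it is in particular first order, so there is an ordinary first order sentence $\theta$ over a single $k$-ary predicate $R$ with $(A,R)\in\D\iff(A,R)\models\theta$, and by domain independence one may read this over $(\Fld(R),R)$. This by itself is nowhere near enough --- functional dependence, exclusion and independence atoms are all first order as properties of relations, yet none is strongly first order nor definable in the target logic --- so the work consists in exploiting the hypothesis on the \emph{whole} logic $\FO(\D)$, not merely on $\D$, in order to force $\theta$ (equivalently $\D$) into a shape the target logic can mimic.

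\smallskip
\noindent\textbf{The structural step (the heart).}
I would prove that a non-jumping, domain independent, strongly first order $\D$ admits a \emph{finite} decomposition into simple pieces: there are finitely many pairs $(\E_i,\mathbf D_i)_{i\in I}$ with each $\E_i$ an upwards closed first order dependency and each $\mathbf D_i$ a downwards closed, domain independent, empty-team, strongly first order dependency --- so, by Theorem~4.5 of \cite{galliani2019characterizing}, definable in $\FO(=\!\!(\cdot))$ --- such that $(A,R)\in\D$ iff $(A,R)\in\E_i$ and $(A,R)\in\mathbf D_i$ for some $i$. Intuitively $\E_i$ records that $R$ contains one of finitely many ``essential configurations'' and $\mathbf D_i$ that, that configuration aside, $R$ is ``tame''. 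The existence of such a finite decomposition --- in particular a uniform bound on configuration size, and the fact that the tame part is genuinely downwards closed and strongly first order --- should follow by contradiction: were it to fail, one would extract from the failure, using compactness and L\"owenheim--Skolem (legitimate because $\FO(\D)$ collapses to first order), teams witnessing that $\FO(\D)$ separates structures no first order sentence separates --- e.g.\ simulating unbounded cardinality comparisons through ever-larger essential configurations, or an infinite alternation contradicting first-orderness of the tame part. Non-jumping is precisely the hypothesis that outlaws the ``staircase'' pathology (a dependency flipping in and out along an increasing chain of relations) that would otherwise prevent the relevant substructures from stabilising; the purely downwards closed case is the degenerate one in which every $\E_i$ is trivial, recovering the earlier characterisation.

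\smallskip
\noindent\textbf{Assembling the formula.}
Given the decomposition, set $\phi_\D(\tuple x):=\bigsqcup_{i\in I}\big(\eta_i(\tuple x)\wedge\delta_i(\tuple x)\big)$, where $\eta_i(\tuple x)$ is the $\DD^\uparrow$-atom for $\E_i$ applied to $\tuple x$ --- so $\M\models_X\eta_i(\tuple x)\iff(M,X(\tuple x))\in\E_i$ --- and $\delta_i(\tuple x)$ is an $\FO(=\!\!(\cdot))$ formula defining $\mathbf D_i$. The global disjunction realises the ``for some $i$'' and the conjunction the ``and''. The empty team must be handled separately: it satisfies every $=\!\!(\cdot)$ atom and every first order literal, hence every $\delta_i$, but satisfies $\eta_i(\tuple x)$ only when $(M,\emptyset)\in\E_i$; so one guards the disjunction --- e.g.\ by an extra $\sqcup$-disjunct built around $\NE(\tuple x)$, present or absent according to whether $(M,\emptyset)\in\D$ --- so that it matches the behaviour of $\D$ on the empty relation. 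Membership of $\phi_\D$ in $\FO(=\!\!(\cdot),\DD^\uparrow,\sqcup)$ is then immediate.

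\smallskip
\noindent\textbf{Main obstacle.}
Everything difficult is concentrated in the structural step: showing that strong first-orderness together with non-jumping forces the \emph{finite} pattern decomposition with a uniform bound on configuration size, and that the residual ``tame'' dependencies are downwards closed and strongly first order so that \cite{galliani2019characterizing} applies. This requires a careful Ehrenfeucht--Fra\"iss\'e and compactness analysis in which the precise definition of non-jumping does essential work; by comparison, once such a decomposition is in hand the passage to a formula --- constancy atoms to globalise the configuration, $\DD^\uparrow$-atoms for the monotone part, $\sqcup$ for the case split --- is routine.
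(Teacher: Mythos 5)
A preliminary remark: this paper does not prove Theorem~\ref{thm:nonjumping} at all --- it imports it from \cite{galliani2019nonjumping} (Corollary 31) as a black box --- so there is no in-paper proof to measure you against; I can only assess your argument on its own terms. Your target is the right one: the known characterization does indeed present $\D$ as a finite global disjunction in which upwards closed first order atoms supply an ``essential configuration'' and constancy-definable, downwards closed material supplies a ``tame'' part, and your assembly of the formula from such a decomposition (including the empty-team guard) would be routine if the decomposition were in hand.

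The genuine gap is that the decomposition is never proved: everything you correctly call ``the heart'' is replaced by a sentence of intent (``should follow by contradiction \ldots{} using compactness and L\"owenheim--Skolem \ldots{} simulating unbounded cardinality comparisons \ldots{} or an infinite alternation''). Concretely, three things are missing and none is a formality. (i) To invoke Theorem~4.5 of \cite{galliani2019characterizing} you need each residual piece $\mathbf D_i$ to be \emph{itself} downwards closed, domain independent, empty-team and \emph{strongly first order}; strong first-orderness of $\D$ does not automatically transfer to components of a decomposition of $\D$, so this must be argued, and you give no mechanism for it. (ii) The finiteness of the index set and the uniform bound on configuration size are exactly where compactness must be deployed against a concretely specified theory (compare the proofs of Proposition~\ref{propo:local_char} and Theorem~\ref{thm:SFO_UC} in this paper for the union-closed case, where the theory $\{\chi(S) : \chi \text{ U-sentence}, (A,R)\models\chi(R)\}\cup\{\lnot\D(S)\}$ and a saturated model of it do the real work); you name the tools but never the theory they act on nor the contradiction they yield. (iii) Your intermediate claim may be stronger than what is true or needed: a formula of $\FO(=\!\!(\cdot),\DD^\uparrow,\sqcup)$ such as $\exists \tuple y(=\!\!(\tuple y)\wedge\E\,\tuple y\tuple x\wedge\delta(\tuple y,\tuple x))$ defines a dependency of the form ``there is a tuple $\tuple a$ of constants such that $P_{\tuple a}(R)$ and $Q_{\tuple a}(R)$'', with the upward and downward conditions sharing the witness $\tuple a$; this need not factor as a parameter-free intersection $\E_i\cap\mathbf D_i$, and you would have to either prove that the parameter-free form suffices or carry the parameters through, a distinction the proposal does not notice. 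Finally, non-jumping --- the one hypothesis separating this theorem from a false general claim --- enters your argument only as a metaphor (``outlaws the staircase pathology''); a proof must use its actual content, namely that above each $(M,R)\in\D$ there is a maximal $R'$ with the entire interval $[R,R']$ contained in $\D$.
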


Finally, in \cite{galliani2021doubly}, a characterization was found for dependencies that are domain-independent\footnote{Again, or just relativizable.} and \emph{doubly} strongly first order, in the sense that they and their negation can be added to First Order Logic (separately or jointly) without increasing its expressive power. 

In the next section we will find a necessary and sufficient condition for \emph{union-closed}, domain-independent dependencies to be relativizable, thus completing the characterization of ``strong first-orderness'' with respect to dependencies satisfying the main closure properties discussed in the literature, modulo the condition of domain independence (which, as mentioned, is a natural condition that could be argued to apply to any sensible dependency notion).\footnote{Nonetheless, there are dependencies of interest, such as independence atoms $\bot$, which satisfy none of these closure properties; and thus, the problem of fully characterizing strongly first order dependencies remains not entirely solved yet.}
\section{Characterizing Union-Closed Dependencies}
A key insight for our characterization is the observation that if $\D$ is domain-independent and strongly first order then there cannot exist a chain of models satisfying and not satisfying $\D$ as per Figure \ref{fig:forbidden_chain}:

\begin{figure}
		\begin{center}
		\begin{tikzpicture}
		     \draw[black, thick] (0,-1.2) rectangle (9,1.2);
		     \draw[black, thick,dashed] (9, 1.2) -- (10.5, 1.2);
		     \draw[black, thick, dashed] (9, -1.2) -- (10.5, -1.2);
		     \draw[black, thick] (3,1.2)--(3,-1.2);
		     \draw[black, thick](6,1.2)--(6,-1.2);
		     
		     \draw[blue] (1.5,0.5) -- (3,0.5);
		     \draw[blue] (4,0.5) -- (6,0.5);
		     \draw[blue] (7,0.5) -- (9,0.5);
		     
		     \draw[blue] (1.5,0.5) -- (1.5,-0.5);
		     
		     \draw[blue] (1.5,-0.5) -- (3,-0.5);
		     \draw[blue] (4,-0.5) -- (6,-0.5);
		     \draw[blue] (7,-0.5) -- (9,-0.5);
		     
		     \draw[blue, dashed] (9,-0.5) -- (10.5,-0.5);
		     \draw[blue, dashed] (9,0.5) -- (10.5,0.5);
		     
		     \draw[red] (4,0.5) to[out=-0,in=0,distance=25] (4,-0.5);
		     \draw[red] (4,0.5) -- (3,0.5);
		     \draw[red] (4,-0.5) -- (3, -0.5); 
		     
		     \draw[red] (7,0.5) to[out=-0,in=0,distance=25] (7,-0.5);
	        \draw[red] (7,0.5) -- (6,0.5);
		     \draw[red] (7,-0.5) -- (6, -0.5); 
		     
		     \node at (2.7,-1){$A_1$};
		     \node at (5.7,-1){$A_2$};
		     \node at (8.7,-1){$A_3$};
		     \node[blue] at (2.7,0){$R_1$};
		     \node[blue] at (5.7,0){$R_2$};
		     \node[blue] at (8.7,0){$R_3$};
		     \node[red] at (4.4,0){$S_1$};
		     \node[red] at (7.4,0){$S_2$};
		\end{tikzpicture}
			 \end{center}
				\caption{
				A forbidden elementary chain. As per Lemma \ref{lemma:nochain}, if $\D$ is domain independent and strongly first order, it cannot be the case that, for all $i \in \mathbb N$, $(A_i, R_i) \preceq (A_{i+1}, R_{i+1})$, $(A_i, R_i) \in \D$, and $(A_{i+1}, S_i) \not \in \D$ for $R_i \subseteq S_i \subseteq R_{i+1}$.
				}
		 \label{fig:forbidden_chain}
	 \end{figure}
\begin{lemma}
Let $\D$ be a $k$-ary domain-independent dependency and suppose that there exists an elementary chain $(\A_i)_i \in \mathbb N$, where each $\A_i$ is of the form $(A_i, R_i)$ for some domain $A_i$ and some $k$-ary relation $R_i$ over $A_i$, such that 
\begin{itemize}
    \item $\A_i = (A_i, R_i) \in \D$ for all $i \in \mathbb N$; 
    \item $\A_i$ is an elementary substructure of $\A_{i+1}$ ($\A_i \preceq \A_{i+1}$) for all $i \in \mathbb N$; 
    \item For all $i \in \mathbb N$ there exists some $k$-ary relation $S_i$ over $A_{i+1}$ such that $R_i \subseteq S_i \subseteq R_{i+1}$ and $(A_{i+1}, S_i) \not \in \D$.
\end{itemize}
Then $\D$ is not strongly first order. 
\label{lemma:nochain}
\end{lemma}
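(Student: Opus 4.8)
The plan is to argue by contradiction: assume $\D$ is strongly first order, so that the sentence $\phi \equiv \forall \tuple x(\lnot R\tuple x \vee (R\tuple x \wedge \D\tuple x))$ — which over Team Semantics defines the dependency $\D$ as a property of the relation interpreting $R$ — is equivalent to some genuine first order sentence $\psi$ over the signature $\{R\}$ (with $R$ $k$-ary). More precisely, by the remark after the definition of strongly first order dependencies, $(A,R) \in \D$ iff $(A,R) \models \psi$ for all models $(A,R)$. I will use the forbidden chain of Figure~\ref{fig:forbidden_chain} to build, for arbitrarily large $n$, a model on which $\psi$ behaves inconsistently with its quantifier rank, violating the Łoś–Tarski/Tarski–Vaught machinery or an Ehrenfeucht–Fraïssé argument.

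First I would set up the two ``limit'' structures. Let $\A_\omega = (A_\omega, R_\omega) = \bigcup_i \A_i$ be the union of the elementary chain; by the Tarski–Vaught elementary chain theorem, $\A_i \preceq \A_\omega$ for every $i$, and in particular $\A_\omega \equiv \A_0$, so $(A_\omega, R_\omega) \in \D$ and hence $\A_\omega \models \psi$. Separately, consider the structure $\A_\omega' = (A_\omega, S_\omega)$ where $S_\omega = \bigcup_i S_i$. Since $R_i \subseteq S_i \subseteq R_{i+1}$ for all $i$, we get $S_\omega = R_\omega$, so in fact $\A_\omega' = \A_\omega$ and this naive approach collapses — which tells me the contradiction must instead be extracted at a finite but unbounded level. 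So the real step is: fix the quantifier rank $q$ of $\psi$, and use the chain to produce a single model $(A_{i+1}, S_i)$ that is $\not\in\D$ (hence $\models \lnot\psi$) but is $q$-elementarily equivalent to, or at least $q$-indistinguishable from, some model in $\D$ (hence $\models \psi$) — the domain-independence hypothesis is what lets me replace the ambient domain $A_{i+1}$ by $\Fld(S_i)$ (or not) without changing membership in $\D$, so that the only relevant data is the relation together with finitely many ``witness'' elements.

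The key steps, in order, would be: (1) invoke domain independence to reduce each $(A_{i+1},S_i)\not\in\D$ to $(\Fld(S_i),S_i)\not\in\D$ and each $(A_i,R_i)\in\D$ to $(\Fld(R_i),R_i)\in\D$, so all structures of interest are ``full''; (2) observe that as $i$ grows the relations $R_i, S_i$ interleave between two consecutive members of the elementary chain, so the sub-structures they induce stabilize in their first order theory up to any fixed quantifier rank — here I would run an Ehrenfeucht–Fraïssé argument showing that for $i$ large enough, Duplicator wins the $q$-round game between $(\Fld(R_i),R_i)$ and $(\Fld(S_i),S_i)$, using the elementarity $\A_i \preceq \A_{i+1}$ together with $R_i \subseteq S_i \subseteq R_{i+1}$ to transfer winning strategies; (3) conclude $(\Fld(S_i),S_i) \equiv_q (\Fld(R_i),R_i)$, so $\psi$ (of rank $q$) cannot distinguish them, contradicting $(\Fld(R_i),R_i)\models\psi$ and $(\Fld(S_i),S_i)\models\lnot\psi$.

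I expect step (2) — the Ehrenfeucht–Fraïssé / back-and-forth argument showing the interleaved relations become $q$-indistinguishable from honest members of $\D$ — to be the main obstacle, since the sandwiching $R_i \subseteq S_i \subseteq R_{i+1}$ is only an inclusion of relations on top of an elementary extension of the underlying chain, and one has to be careful that the ``new'' tuples introduced in $S_i \setminus R_i$ do not give Spoiler a winning move; this is exactly where the elementary-chain hypothesis $\A_i \preceq \A_{i+1}$ does the heavy lifting, letting every element realizable in $\A_{i+1}$ with a given type over finitely many parameters also be found, compatibly with the relational structure, inside the part controlled by $R_i$. An alternative to the EF game, which might be cleaner to write, is a compactness argument: take the disjoint union / an ultraproduct of the models $(\Fld(S_i),S_i)$ along a nonprincipal ultrafilter on $\mathbb{N}$ and compare with the corresponding ultraproduct of the $(\Fld(R_i),R_i)$, showing the two limits are elementarily equivalent while one satisfies $\psi$ and the other its negation — I would decide between these two presentations after checking which makes the ``interleaving forces agreement'' point most transparent.
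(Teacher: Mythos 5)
Your plan has a fatal flaw at its core: the only consequence of strong first-orderness you ever invoke is that $\D$, \emph{as a property of the single structure $(A,R)$}, is definable by a first order sentence $\psi$ over $\{R\}$. But that consequence is far too weak to yield a contradiction, and your steps (2)--(3) are actually false. Concretely, the unary anonymity dependency $\D = \{(A,R) : \forall (a,b)\in R\ \exists (a,b')\in R,\ b'\neq b\}$ is domain independent, union closed, and first order in exactly this sense, yet it admits a forbidden chain (take $R_1 = P\times\{c,d\}$ for infinite $P$, an elementary extension $(A_2,R_2)$, and $S_1 = R_1\cup\{(p',c)\}$ for a new $p'$), and of course no contradiction arises: $(\Fld(R_i),R_i)\models\psi$ while $(\Fld(S_i),S_i)\models\lnot\psi$, so these two structures are \emph{not} $q$-equivalent for $q\geq \mathrm{qr}(\psi)$, and no Ehrenfeucht--Fra\"iss\'e or ultraproduct argument can make them so (by \L{}o\'s, the two ultraproducts you propose would still disagree on $\psi$). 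The hypothesis $(A_{i+1},S_i)\notin\D$ together with first-orderness and domain independence \emph{just is} the statement that $(\Fld(S_i),S_i)$ and $(\Fld(R_i),R_i)$ are first-order distinguishable, so a proof that they become indistinguishable would contradict the lemma's own hypotheses. You correctly noticed that the naive limit collapses ($S_\omega = R_\omega$), but the retreat to ``$q$-indistinguishability at a finite level'' does not repair this.

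What is missing is any use of the Team Semantics of $\FO(\D)$, which is where strong first-orderness exceeds mere first-orderness. The paper's proof codes the \emph{entire sequence} $(S_i)_{i}$ into one countable structure $\B=(\mathbb N,<,\mathfrak s,\hat S)$ with $\hat S=\{(i,\tuple m):\tuple m\in S_i\}$, and then considers the $\FO(\D)$ sentence $\exists t\,(t<d\wedge\forall\tuple w(\lnot\hat S t\tuple w\vee(\hat S t\tuple w\wedge\D\tuple w)))$. Because of the team-splitting rule for $\vee$ and the set-valued rule for $\exists$, this sentence expresses the genuinely second-order property ``there is a nonempty set $I$ of indices below $d$ with $(M,\bigcup_{i\in I}S_i)\in\D$''. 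At every standard $d$ this is false (the $S_i$ are nested, so any such union is a single $S_q\notin\D$, using domain independence), but in an uncountable elementary extension with a nonstandard $d$ one may take $I=\mathbb N$, whose union is $\bigcup_i R_i\in\D$. If $\D$ were strongly first order this sentence would be first order and hence preserved under elementary extensions, a contradiction via L\"owenheim--Skolem. Without some device of this kind that exploits the hidden second-order quantification available in $\FO(\D)$, the hypotheses of the lemma cannot be contradicted, so the proposal does not constitute a proof.
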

\begin{proof}
By the L\"owenheim-Skolem Theorem, we can assume that all $(A_i, R_i)$ are countable. Then let $\A = (A, R) = \left( \bigcup_i A_i, \bigcup_i R_i\right)$ be the union of the elementary chain. By the Elementary Chain Theorem we have that $\A \succeq \A_i$ for all $i \in \mathbb N$, and therefore in particular $(A, R) \models \D(R)$; and furthermore since $A$ is the countable union of countable sets it is countable and we can identify it up to isomorphism with the set of natural numbers $\mathbb N$. 

Then let $\B$ be a model, with the same domain $\mathbb N$, whose signature $\Sigma$ contains the binary predicate $<$, the unary function $\mathfrak s$, and the $(k+1)$-ary predicate $\hat{S}$, such that $<$ is interpreted as the usual ordering over $\mathbb N$, $\mathfrak s$ is interpreted as the corresponding successor function $n \mapsto n+1$, and $\hat{S}$ is interpreted as the relation $\{(i, \tuple{m}) : \tuple m \in S_i\}$. 

I state that if $\D$ is strongly first order and domain independent then $\B$ has no uncountable elementary extensions. This is impossible by the L\"owenheim-Skolem Theorem; therefore, if I can prove this I can conclude that $\D$ cannot be strongly first order. 

If the dependency $\D$ is strongly first order and domain-independent, the complete first order theory of $\B$ contains the sentence $\forall v \lnot \phi(v)$, where $\phi(d)$ is the first order sentence over the signature $\Sigma \cup \{d\}$ that is equivalent to the $\FO(\D)$ sentence 
\[
    \exists t (t < d \wedge \forall \tuple w(\lnot \hat S t \tuple w \vee (\hat S t \tuple w \wedge \D(\tuple w))))
\]
that, as we will later verify, states that there exists some nonempty set of indices $I \subseteq \{1 \ldots d\}$ such that $\left(\mathbb N, \bigcup_{i \in I}S_i\right) \in \D$. Indeed, for any such set we have that $\bigcup_{i \in I}S_i = S_q$ for $q = \max(I)$, and by domain independence $(\mathbb N, S_q) \not \in \D$ since $(A_{q+1}, S_q) \not \in \D$. 

Now let $\B' $ be an uncountable elementary extension of $\B$ with domain $B' \supsetneq \mathbb N$, let $d \in B' \backslash \mathbb N$ be a non-standard element of it, and choose $I = \mathbb N$. Then $i < d$ for all $i \in \mathbb N$, and furthermore since $\bigcup_{i \in \mathbb N} S_i = \bigcup_{i \in \mathbb N} R_i$ and since as we already observed $(\bigcup_i A_i, \bigcup_i R_i) \in \D$ we have that $(\bigcup_i A_i, \bigcup_i S_i) \in \D$. But then, as $\D$ is domain independent, $(B', \bigcup_i S_i) \in \D$. Therefore $\B' \models \phi(d)$, which contradicts the claim that $\B'$ is an elementary extension of $\B$. \vskip 1em

It remains to check that the formula $\phi(d)$ indeed specifies the required property. Suppose that $\M \models \phi(d)$. Then, by the rules of Team Semantics, there exists some function $H: \{\epsilon\} \rightarrow \mathcal P(M)\backslash \{\emptyset\}$ such that $\M \models_{\{\epsilon\}[H/t]} t<d \wedge \forall \tuple w(\lnot \hat S t \tuple w \vee (\hat S t \tuple w \wedge \D(\tuple w)))$. Now let $X = \{\epsilon\}[H/t]$ and let $I = X(t)$. 

Then $I \not = \emptyset$; and since $\M \models_X t < d$, $i < d$ for all $i \in I$, as required. 

For $Y = X[M/\tuple w]$,\footnote{This is a shorthand for $X[M/\tuple w_1][M/\tuple w_2] \ldots [M/\tuple w_k]$.} we furthermore have that $\M \models_Y \lnot \hat S t \tuple w \vee (\hat S t \tuple w \wedge \D(\tuple w))$. Therefore, $Y = Y_1 \cup Y_2$, where 
\begin{itemize}
    \item For all $s \in Y_1$, $s(\tuple w) \not \in S_{s(t)}$; 
    \item For all $s \in Y_2$, $s(\tuple w) \in S_{s(t)}$; 
    \item $(M, Y_2(\tuple w)) \in \D$. 
\end{itemize}
Then necessarily $Y_1 = \{s \in Y: s(\tuple w) \not \in S_{s(t)}\}$ and $Y_2 = \{s \in Y: s(\tuple w) \in S_{s(t)}\}$; but by construction, in $Y$ the tuple $\tuple w$ takes all possible values in $M^k$ for all possible values $i \in I$ of $t$, and therefore $Y_2(\tuple w) = \{\tuple m : \exists i \in I \text{ s.t. } \tuple m \in S_i\} =  \bigcup_{i \in I} S_i$. Therefore $(M, \bigcup_{i \in I} S_i) \in \D$ as required. 

Conversely, suppose that there exists a nonempty set $I$ of elements less than $d$ such that $(M, \bigcup_{i \in I} S_i) \in \D$. Then let $H: \{\epsilon\} \rightarrow \mathcal P(M) \backslash \{\emptyset\}$ be such that $H(\epsilon) = I$, and let $X = \{\epsilon\}[H/t]$ so that $X(t) = I$. 

Clearly $\M \models_X t < d$, and I only need to prove that -- for $Y = X[M/\tuple w]$ -- $\M \models_Y \lnot \hat S t \tuple w \vee (\hat S t \tuple w \wedge \D(\tuple w))$. Now let $Y_1 = \{s \in Y : s(\tuple w) \not \in S_{s(t)}\}$ and $Y_2 = \{s \in Y : s(\tuple w) \in S_{s(t)}\}$: then $Y = Y_1 \cup Y_2$, $\M \models_{Y_1} \lnot \hat S t \tuple w$ and $\M \models_{Y_2} \hat S t \tuple w$. Moreover, $Y_2(\tuple w) = \{s(\tuple w) \in X[M/\tuple w]: s(\tuple w) \in S_{s(t)}\} = \bigcup_{i \in X(t)} S_i = \bigcup_{i \in I} S_i$, and therefore $(M, Y_2(\tuple w)) \in \D$ and $\M \models_{Y_2} \D\tuple w$ as required.
\end{proof}
In fact, if $\D$ is strongly first order and domain independent even one single "step" (as depicted in Figure \ref{fig:forbidden_step}) of the chain of Figure \ref{fig:forbidden_chain} cannot occur:
 \begin{figure}
	     \begin{center}
		\begin{tikzpicture}
		     \draw[black, thick] (0,-1.2) rectangle (6,1.2);
		     \draw[black, thick] (3,1.2)--(3,-1.2);

		     \draw[blue] (1.5,0.5) -- (3,0.5);
		     \draw[blue] (4,0.5) -- (5.5,0.5);
		     
		     \draw[blue] (1.5,0.5) -- (1.5,-0.5);
		     \draw[blue] (1.5,-0.5) -- (3,-0.5);
		     \draw[blue] (4,-0.5) -- (5.5,-0.5);
		     
		     \draw[blue] (5.5,0.5) to (5.5,-0.5);
		     
		     \draw[red] (4,0.5) to[out=-0,in=0,distance=25] (4,-0.5);
		     \draw[red] (4,0.5)--(3,0.5);
		     \draw[red] (4,-0.5)--(3,-0.5);
		     
		     \node at (2.7,-1){$A_1$};
		     \node at (5.7,-1){$A_2$};
		     \node[blue] at (2.7,0){$R_1$};
		     \node[blue] at (5.2,0){$R_2$};
		     \node[red] at (4.4,0){$S_1$};
		\end{tikzpicture}
		\end{center}
			\caption{
				A forbidden elementary "step". As per Proposition \ref{propo:nostep}, if $\D$ is domain independent and strongly first order, it cannot be the case that $(A_1, R_1) \preceq (A_2, R_2)$, $(A_1,R_1) \in \D$, $(A_2, R_2) \in \D$, and $(A_2, S_1) \not \in \D$ for some $S_1$ such that $R_1 \subseteq S_1 \subseteq R_2$.
				}
		 \label{fig:forbidden_step}
	 \end{figure}
\begin{proposition}
Let $\D$ be a domain independent, strongly first order dependency and suppose that $(A_1, R_1) \in \D$ and that $(A_1, R_1) \preceq (A_2, R_2)$. Then for all relations $S_1$ with $R_1 \subseteq S_1 \subseteq R_2$, $(A_2, S_1) \in \D$ as well.  
\label{propo:nostep}
\end{proposition}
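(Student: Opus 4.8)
The plan is to prove the contrapositive: assume there is a ``bad step'', i.e. a relation $S_1$ with $R_1\subseteq S_1\subseteq R_2$ and $(A_2,S_1)\notin\D$, and deduce that $\D$ cannot be strongly first order. Since a strongly first order dependency is in particular first order (as noted just after the definition of strong first-orderness), I fix a first order sentence $\D(R)$ over the signature $\{R\}$, with $R$ a $k$-ary relation symbol, such that $(A,R)\in\D\Leftrightarrow(A,R)\models\D(R)$ for every model $(A,R)$; I write $\D(S)$ for the same sentence with $R$ renamed to a fresh $k$-ary symbol $S$. Note first that from $(A_1,R_1)\in\D$ and $(A_1,R_1)\preceq(A_2,R_2)$ we obtain $(A_2,R_2)\models\D(R)$, i.e. $(A_2,R_2)\in\D$.

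The heart of the argument is a ``one step'' claim: \emph{given $\{R\}$-structures $\A\preceq\B$ with $\B\in\D$ and a relation $S$ with $R^\A\subseteq S\subseteq R^\B$ and $(B,S)\notin\D$, there exist an elementary extension $\B\preceq\mathfrak C$ and a relation $S'$ with $R^\B\subseteq S'\subseteq R^{\mathfrak C}$ and $(C,S')\notin\D$.} Then $\mathfrak C\in\D$ automatically, since $\B\models\D(R)$ and $\B\preceq\mathfrak C$. Granting the claim, I iterate it starting from $\A=(A_1,R_1)$, $\B=(A_2,R_2)$, $S=S_1$, producing an elementary chain $(A_1,R_1)\preceq(A_2,R_2)\preceq(A_3,R_3)\preceq\cdots$ all of whose members lie in $\D$, together with relations $S_i$ with $R_i\subseteq S_i\subseteq R_{i+1}$ and $(A_{i+1},S_i)\notin\D$ for every $i$. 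This is precisely a forbidden chain as in Lemma~\ref{lemma:nochain}, which (using domain independence of $\D$) gives that $\D$ is not strongly first order --- the contradiction sought.

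To prove the one step claim I use compactness. Introduce a constant $c_b$ for each $b\in B$ and consider the $\{R,S\}\cup\{c_b:b\in B\}$-theory
\[
T\;=\;\mathrm{ElDiag}_{\{R\}}(\B)\;\cup\;\{\,Sc_{\vec b}:\vec b\in R^\B\,\}\;\cup\;\{\,\forall\vec x\,(S\vec x\rightarrow R\vec x)\,\}\;\cup\;\{\,\lnot\D(S)\,\},
\]
where $\mathrm{ElDiag}_{\{R\}}(\B)$ is the elementary diagram of $\B$ in the signature $\{R\}$ and $c_{\vec b}$ abbreviates the tuple of constants naming the components of $\vec b$. A model of $T$, with its $\{R\}$-reduct viewed through $b\mapsto c_b$ as an elementary extension $\mathfrak C\succeq\B$ and with $S':=S^{\mathfrak C}$, witnesses the claim: the second group of axioms forces $R^\B\subseteq S'$, the third forces $S'\subseteq R^{\mathfrak C}$, and $\lnot\D(S)$ forces $(C,S')\notin\D$. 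It remains to check that $T$ is finitely satisfiable. Let $T_0\subseteq T$ be finite; its diagram sentences amount to a single $\{R\}$-formula $\varphi(\vec b)$ true in $\B$, where $\vec b=(b_1,\dots,b_p)$ lists all elements of $B$ named by constants occurring in $T_0$, so in particular each $S$-atom of $T_0$ has the form $Sc_{\vec b_{I_j}}$ ($j=1,\dots,l$) for some subtuple $\vec b_{I_j}$ of $\vec b$ with $\vec b_{I_j}\in R^\B$. Put $\Phi(\vec x):=\varphi(\vec x)\wedge\bigwedge_{j=1}^{l}R(\vec x_{I_j})$; then $\B\models\Phi(\vec b)$, hence $\B\models\exists\vec x\,\Phi(\vec x)$, and since $\A\preceq\B$ there is a tuple $\vec d$ of elements of $A$ with $\A\models\Phi(\vec d)$. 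I then claim that the $\{R,S\}\cup\{c_b\}$-structure whose $\{R\}$-reduct is $\B$, with $S$ interpreted by the given relation $S$ and each $c_{b_i}$ (occurring in $T_0$) interpreted by $d_i$, satisfies $T_0$: the diagram sentences hold because $\A\models\varphi(\vec d)$ and $\A\preceq\B$ give $\B\models\varphi(\vec d)$; each atom $Sc_{\vec b_{I_j}}$ now asserts $\vec d_{I_j}\in S$, which holds because $\A\models R(\vec d_{I_j})$ gives $\vec d_{I_j}\in R^\A\subseteq S$; the axiom $\forall\vec x\,(S\vec x\rightarrow R\vec x)$ holds because $S\subseteq R^\B$; and $\lnot\D(S)$ holds because $(B,S)\notin\D$.

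I expect the crux to be exactly this finite-satisfiability check: the trick is to absorb the ``membership in $R$'' of the finitely many $S$-witnesses into the formula $\Phi$ and then use elementarity of $\A\preceq\B$ to relocate all of them inside $A$, where they automatically lie in $R^\A$ and hence in $S$. Without such a relocation one could not rule out that adjoining a few tuples to $S$ pushes it into $\D$, which would make the finite model collapse. Everything else --- the reduction to a forbidden chain, the preservation $\mathfrak C\in\D$, and the standard bookkeeping with elementary diagrams --- should be routine, and domain independence is used only through the appeal to Lemma~\ref{lemma:nochain}.
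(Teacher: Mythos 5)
Your proof is correct and follows essentially the same route as the paper's: assume a bad step, use compactness on the elementary diagram of $(A_2,R_2)$ together with $\{S\tuple b:\tuple b\in R_2\}$, $\forall\tuple x(S\tuple x\to R\tuple x)$ and $\lnot\D(S)$, verify finite satisfiability by relocating the finitely many named elements into $A_1$ via elementarity so that the $S$-witnesses land in $R_1\subseteq S_1$, and iterate to produce the forbidden chain of Lemma~\ref{lemma:nochain}. Your bookkeeping with a single master tuple $\vec b$ and the conjuncts $R(\vec x_{I_j})$ cleanly handles the identity-type/consistency issue the paper addresses explicitly.
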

\begin{proof}
Suppose that this is not the case, i.e. that -- as per Figure \ref{fig:forbidden_step} -- $(A_1, R_1) \preceq (A_2, R_2)$, $R_1 \subseteq S_1 \subseteq R_2$ and $(A_2, S_1) \not \in \D$. 

Then we can build a chain as per Figure \ref{fig:forbidden_chain} as follows: start by with the given $A_1$, $R_1$, $A_2$, $R_2$, and $S_1$. Then consider the theory 
\begin{align*}
    &T := \{\phi(R_3, \tuple a): \tuple a \subseteq A_2, (A_2, R_2) \models \phi(R_2, \tuple a), \phi \in \FO\} \cup \{S_2 \tuple b : \tuple b \in R_2\} \cup\\
    & ~~~~~~~
    \{\forall \tuple x (S_2 \tuple x \rightarrow R_3 \tuple x), \lnot \D(S_2)\}. 
\end{align*}
By compactness, this theory is satisfiable. Indeed, any finite subset $T_0$ of $T$ is entailed by some first order sentence of the form $\phi(R_3, \tuple a) \wedge \bigwedge_{i=1}^n S_2 \tuple b_i \wedge \forall \tuple x (S_2 \tuple x \rightarrow R_3 \tuple x) \wedge \lnot \D(S_2)$, where $(A_2, R_2) \models \phi(R_2, \tuple a) \wedge \bigwedge_{i=1}^n R_2 \tuple b_i$.

But then $(A_2, R_2) \models \exists \tuple z \tuple w_1 \ldots \tuple w_n (\phi(R_2, \tuple z) \wedge \bigwedge_{i=1}^n R_2 \tuple w_i \wedge \tau(\tuple z \tuple w_1 \ldots \tuple w_n))$, where $\tau(\tuple z \tuple w_1 \ldots \tuple w_n)$ describes the identity type of $\tuple a \tuple b_1 \ldots \tuple b_n$, and so since $(A_1, R_1) \preceq (A_2, R_2)$ there exists a tuple $\tuple c$ in $A_1$ and tuples $\tuple d_1 \ldots \tuple d_n$ in $R_1$ such that $\tuple c \tuple d_1 \ldots \tuple d_n$ has the same identity type as $\tuple a \tuple b_1 \ldots \tuple b_n$ and $(A_1, R_1) \models \phi(R_1, \tuple c)$. But then $(A_2, R_2) \models \phi(R_2, \tuple c)$ as well, and moreover $\tuple d_1 \ldots \tuple d_n \in R_1 \subseteq S_1 \subseteq R_2$, and so $T_0$ can be satisfied by choosing a model with domain $A_2$ in which $\tuple a \tuple b_1 \ldots \tuple b_n$ are interpreted as $\tuple c \tuple d_1 \ldots \tuple d_n$, $S_2$ is interpreted as $S_1$, and $R_3$ is interpreted as $R_2$. 

Now, any model of $T$ is an elementary extension $(A_3, R_3)$ of $(A_2, R_2)$ with some $S_2$ with $R_2 \subseteq S_2 \subseteq R_3$ and $(A_3, S_2) \not \in \D$.\footnote{Instead $(A_3, R_3) \in \D$, because $(A_3, R_3) \succeq (A_2, R_2) \in \D$ and $\D$ is first order.} Iterating this procedure, we can find an infinite chain as per Figure \ref{fig:forbidden_chain}; but by Lemma \ref{lemma:nochain} this is impossible.
\end{proof}

%So far, we made no use of union closure: and indeed, Proposition \ref{propo:nostep} holds for any domain independent, strongly first order dependency regardless of whether it is union closed or not. 
Now we can find  a "local characterization" of strongly first order, domain independent, union closed dependencies:
\begin{definition}[U-Sentences]
Let $R$ be a $k$-ary relation symbol and let $\tuple a$ be a tuple of constant symbols. Then a first order sentence over the signature $\{R, \tuple a\}$ is a U-sentence if and only if it is of the form 
\[
    \exists \tuple x (\eta(\tuple x) \wedge \forall \tuple y (R \tuple y \rightarrow \theta(\tuple x, \tuple y)))
\]
where $\tuple x$ and $\tuple y$ are disjoint tuples of variables, $\eta(x)$ is a conjunction of first order literals over the signature $\{R, \tuple a\}$ in which $R$ occurs only positively,  and  $\theta(\tuple x, \tuple y)$ is a first order formula over the signature $\{\tuple a\}$  (i.e. in which $R$ does not appear).

Given two models $\A$ and $\B$ with the same signature, we will write $\A \Rrightarrow_U \B$ if, for every $U$-sentence $\phi$, $\A \models \phi \Rightarrow \B \models \phi$. 
\end{definition}

\begin{proposition}
The conjunction of two U-sentences is equivalent to some U-sentence; and if $a$ is a constant symbol occurring on some $U$-sentence $\chi$ and $v$ is a new variable not occurring in $\chi$, $\exists v \chi[v/a]$ is also a U-sentence.\footnote{Here $\chi[v/a_{\lambda}]$ represents the formula obtained from $\chi$ by replacing the constant $a_\lambda$ with the variable $v$.}
\label{propo:Uconex}
\end{proposition}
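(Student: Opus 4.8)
The plan is to prove the two assertions separately, in each case by explicitly rewriting the given U-sentence(s) into the required syntactic shape and then checking that the three side-conditions of the definition are preserved: that $R$ occurs only positively in the $\eta$-part, that $R$ does not occur at all in the $\theta$-part, and that the existential block $\tuple x$ and the universal block $\tuple y$ remain disjoint.

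For the conjunction, I would start from two U-sentences
\[
\phi_j \;=\; \exists \tuple x_j\,\bigl(\eta_j(\tuple x_j)\wedge\forall \tuple y_j\,(R\tuple y_j\rightarrow\theta_j(\tuple x_j,\tuple y_j))\bigr),\qquad j=1,2,
\]
and first rename bound variables so that $\tuple x_1,\tuple x_2,\tuple y_1,\tuple y_2$ are pairwise disjoint. Standard prenexing then lets me pull the two existential blocks to the front of the conjunction, so that $\phi_1\wedge\phi_2$ becomes $\exists\tuple x_1\tuple x_2\,(\eta_1\wedge\eta_2\wedge\forall\tuple y_1(R\tuple y_1\rightarrow\theta_1)\wedge\forall\tuple y_2(R\tuple y_2\rightarrow\theta_2))$. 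The one step that does real work is merging the two universal blocks: since $\tuple y_1$ and $\tuple y_2$ both have length $k=\mathrm{ar}(R)$, renaming both to a single fresh tuple $\tuple y$ turns $\forall\tuple y_1(R\tuple y_1\rightarrow\theta_1)\wedge\forall\tuple y_2(R\tuple y_2\rightarrow\theta_2)$ into $\forall\tuple y\,(R\tuple y\rightarrow(\theta_1(\tuple x_1,\tuple y)\wedge\theta_2(\tuple x_2,\tuple y)))$. Taking $\tuple x=\tuple x_1\tuple x_2$, $\eta=\eta_1\wedge\eta_2$ and $\theta=\theta_1\wedge\theta_2$ exhibits $\phi_1\wedge\phi_2$ in the required form; I then check that $\eta$, being a conjunction of two conjunctions of literals each with $R$ positive, is again a conjunction of literals with $R$ positive; that $\theta$ still contains no $R$; and that $\tuple x$ is disjoint from $\tuple y$ by the initial renaming.

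For the second assertion, write the given U-sentence as $\chi=\exists\tuple x\,(\eta(\tuple x)\wedge\forall\tuple y\,(R\tuple y\rightarrow\theta(\tuple x,\tuple y)))$ and push the substitution $[v/a]$ through the connectives and quantifiers: since $v$ is new, $\chi[v/a]=\exists\tuple x\,(\eta[v/a]\wedge\forall\tuple y\,(R\tuple y\rightarrow\theta[v/a]))$, and therefore $\exists v\,\chi[v/a]$ is equivalent to $\exists v\tuple x\,(\eta[v/a]\wedge\forall\tuple y\,(R\tuple y\rightarrow\theta[v/a]))$. I would then take the new existential block to be $v\tuple x$, the new $\eta$-part to be $\eta[v/a]$ and the new $\theta$-part to be $\theta[v/a]$, and verify: replacing a constant by a fresh variable does not change the polarity of $R$, so $R$ is still only positive in $\eta[v/a]$; it does not introduce $R$ into $\theta[v/a]$; $v$ is disjoint from $\tuple y$; and the free variables of $\eta[v/a]$ resp.\ $\theta[v/a]$ lie in $\{v\}\cup\tuple x$ resp.\ $\{v\}\cup\tuple x\cup\tuple y$, so the free-variable bookkeeping is consistent with the notation $\eta(\cdot)$, $\theta(\cdot,\cdot)$.

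I do not expect a genuine obstacle: both parts are essentially prenex-normal-form manipulations. The only points needing a little care are (i) the variable renamings required before quantifiers can be pulled out, (ii) the observation that the two $\forall$-blocks can be identified precisely because both quantify over $k$-tuples for the \emph{same} $k$, and (iii) confirming that no rewriting step disturbs the positivity of $R$ in the $\eta$-part or the $R$-freeness of the $\theta$-part — all of which are immediate once written out.
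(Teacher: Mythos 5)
Your proposal is correct and follows essentially the same route as the paper: rename so the existential blocks are disjoint and the two universal blocks coincide, merge them into $\forall\tuple y(R\tuple y\rightarrow(\theta_1\wedge\theta_2))$, and absorb the fresh variable $v$ into the existential prefix for the substitution case. The paper states these rewritings without the explicit side-condition checks, which your version spells out; no substantive difference.
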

\begin{proof}
Let $\chi =  \exists \tuple x (\eta(\tuple x) \wedge \forall \tuple y (R \tuple y \rightarrow \theta(\tuple x, \tuple y)))$
and $\chi' = \exists \tuple z (\eta'(\tuple z) \wedge \forall \tuple w (R \tuple w \rightarrow \theta'(\tuple z, \tuple w)))$
be two U-sentences over the same signature $\{R, \tuple a\}$. Renaming variables as necessary, we can assume that $\tuple x$ is disjoint from $\tuple z$ and that $\tuple y = \tuple w$. 

Then $\chi \wedge \chi'$ is equivalent to $\exists \tuple x \tuple z ( (\eta(\tuple x) \wedge \eta'(\tuple z)) \wedge \forall \tuple y (R \tuple y \rightarrow (\theta(\tuple x, \tuple y) \wedge \theta'(\tuple z, \tuple y))))$, which is also a $U$-sentence. 

Moreover, if $a$ be any constant symbol occurring in $\chi$ and $v$ is a variable that does not occur in $\chi$, $\exists v( \chi[v/a])$ is the U-sentence
\[
    \exists v \exists \tuple x (\eta(\tuple x)[v/a] \wedge \forall \tuple y (R \tuple y \rightarrow \theta(\tuple x, \tuple y)[v/a]))
\]
\end{proof}

\begin{proposition}
Let $\D$ be domain independent, strongly first order and union closed, and suppose that $(A, R) \in \D$. 

Then there exists some U-sentence $\chi(R)$ over the signature $\{R\}$ such that 
\begin{enumerate}
    \item $(A, R) \models \chi(R)$; 
    \item $\chi(R) \models \D(R)$. 
\end{enumerate}
\label{propo:local_char}
\end{proposition}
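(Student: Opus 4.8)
The plan is to argue by contradiction and reduce to Proposition~\ref{propo:nostep} (the ``no forbidden step'' result). So suppose $\D$ is domain independent, strongly first order and union closed, that $(A,R)\in\D$, but that \emph{no} U-sentence $\chi(R)$ over $\{R\}$ satisfies both $(A,R)\models\chi(R)$ and $\chi(R)\models\D(R)$. Since $\D$ is first order it is defined by a first order sentence $\D(R)$ over $\{R\}$, and since (Proposition~\ref{propo:Uconex}) U-sentences over $\{R\}$ are closed under finite conjunction, the theory consisting of $\lnot\D(R)$ together with all U-sentences true in $(A,R)$ is finitely satisfiable: a finite fragment of it is entailed by $\chi(R)\wedge\lnot\D(R)$ for a single U-sentence $\chi(R)$ true in $(A,R)$, and by assumption $\chi(R)\not\models\D(R)$, so it has a model. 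By compactness I obtain a model $(B,S)$ with $(A,R)\Rrightarrow_U(B,S)$ and $(B,S)\notin\D$.

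The second step is another compactness argument, producing an elementary extension $(B^*,S^*)\succeq(B,S)$ and a map $f\colon\Fld(R)\to B^*$ such that $f$ is injective, $f[R]\subseteq S^*$, and for every finite tuple $\tuple c$ from $\Fld(R)$ and every equality-only formula $\theta(\tuple x,\tuple y)$ with $(A,R)\models\forall\tuple y(R\tuple y\rightarrow\theta(\tuple c,\tuple y))$ one has $(B^*,S^*)\models\forall\tuple y(S^*\tuple y\rightarrow\theta(f(\tuple c),\tuple y))$. Finite satisfiability of the relevant theory (the language of $(B,S)$ expanded with new constants for the elements of $\Fld(R)$, the ``$f$ is injective'' clauses, the clauses $f[R]\subseteq S^*$, and the bounding clauses above) holds because each finite fragment is implied, via $(A,R)\Rrightarrow_U(B,S)$, by a single U-sentence: its positive part describes a finite configuration of $R$ and its universal part gathers finitely many of the $\theta$'s, and together these form exactly a U-sentence true in $(A,R)$, hence in $(B,S)$. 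Since $\lnot\D(R)$ is first order, $(B^*,S^*)\notin\D$; and since $f$ is an isomorphism from $(\Fld(R),R)$ onto $(\Fld(f[R]),f[R])$, domain independence together with $(A,R)\in\D$ gives $(B^*,f[R])\in\D$.

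The heart of the argument is then to show $(B^*,S^*)\in\D$, contradicting $(B^*,S^*)\notin\D$. I would prove that for every $\tuple t\in S^*$ one has $(B^*,f[R]\cup\{\tuple t\})\in\D$. Given $\tuple t$, transport it back along $f$ to a tuple $\tilde{\tuple t}$ whose coordinates lying in $f[\Fld(R)]$ become the corresponding elements of $\Fld(R)$ and whose remaining coordinates become fresh elements, respecting all coordinate equalities. The type asserting ``$R\tilde{\tuple t}$; these coordinates equal these elements of $\Fld(R)$; the remaining coordinates are new and satisfy the prescribed equalities'' is consistent with the elementary diagram of $(A,R)$: if some finite part of it were refuted, the refutation would be a sentence $\forall\tuple y(R\tuple y\rightarrow\lnot\psi(\tuple y,\tuple c))$ true in $(A,R)$ for a finite equality-only $\psi$ and finite $\tuple c\subseteq\Fld(R)$; then $\lnot\psi$ is one of the bounding formulas, so $S^*\subseteq\{\tuple y:\lnot\psi(f(\tuple c),\tuple y)\}$, yet $\tuple t\in S^*$ satisfies $\psi(f(\tuple c),\tuple t)$ by construction of $\tilde{\tuple t}$ --- a contradiction. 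Realizing the type yields an elementary extension $(A,R)\preceq(A^*,R^*)$ with a tuple $\tuple y^*\in R^*$ such that $R\cup\{\tuple y^*\}$ is isomorphic (extending $f$) to $f[R]\cup\{\tuple t\}$ and satisfies $R\subseteq R\cup\{\tuple y^*\}\subseteq R^*$; Proposition~\ref{propo:nostep} gives $(A^*,R\cup\{\tuple y^*\})\in\D$, and two applications of domain independence transfer this to $(B^*,f[R]\cup\{\tuple t\})\in\D$. Since $f[R]\subseteq S^*$ we then have $S^*=\bigcup_{\tuple t\in S^*}(f[R]\cup\{\tuple t\})$ (the case $S^*=\emptyset$ being covered by the empty team property, which follows from union closure), so union closure of $\D$ yields $(B^*,S^*)\in\D$, the desired contradiction. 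No separate treatment of finite versus infinite $\Fld(R)$ is needed, though when $\Fld(R)$ is finite one can alternatively take for $\chi(R)$ a U-sentence pinning $(\Fld(R),R)$ down up to isomorphism and appeal directly to domain independence.

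I expect the main obstacle to be the consistency step for the transported type --- verifying that the elementary diagram of $(A,R)$ cannot refute ``$\tilde{\tuple t}\in R$''. This is exactly the point at which the bounding clauses $\forall\tuple y(R\tuple y\rightarrow\theta(\tuple c,\tuple y))$, transported into $(B^*,S^*)$ via the U-sentence transfer, are indispensable; and the accompanying bookkeeping --- tracking which coordinates of $\tuple t$ are ``old'' and which are ``new'' and matching equality types through $f$ and through the type --- is the fiddly part that needs care.
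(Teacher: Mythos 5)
Your proof is correct and follows essentially the same strategy as the paper's: compactness yields a U-equivalent counterexample $(B,S)\notin\D$, an isomorphic copy of $(\Fld(R),R)$ is embedded into it (or an elementary extension of it), each extra tuple of $S$ is absorbed one at a time via Proposition~\ref{propo:nostep} applied in an elementary extension of $(A,R)$, and union closure plus domain independence give the contradiction. The only substantive difference is in how the embedding is built --- the paper takes $(B,S)$ to be $|A|$-saturated and constructs $\mathfrak h\colon A\to B$ by transfinite induction, whereas you obtain $f\colon\Fld(R)\to B^*$ by a second compactness argument into an elementary extension $(B^*,S^*)\succeq(B,S)$ carrying the equality-only ``bounding clauses'' --- which is a clean, equivalent alternative (note that those bounding clauses, instantiated with $\theta=\lnot(\tuple y=\tuple c)$ for $\tuple c\notin R$, automatically give the membership-reflection property that the paper gets from its back-and-forth construction).
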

\begin{proof}
Suppose that this is not the case. Then $R$ (and, therefore, $A$) must be infinite, because otherwise there would exist a $U$-sentence $\exists \tuple x (\bigwedge_{i=1}^n R \tuple t_i \wedge \forall \tuple y (R \tuple y \rightarrow \bigvee_{i} \tuple y = \tuple t_i))$ that fixes $R$ and thus - by domain independence - entails $\D$.

Note that the theory $\{\chi(S) : \chi \text{ U-sentence}, (A, R) \models \chi(R)\} \cup \{\lnot \D(S)\}$ is finitely satisfiable, since otherwise some finite conjunction of U-sentences that are true of $(A, R)$ would entail $\D(R)$ and by Proposition \ref{propo:Uconex} any such finite conjunction is equivalent to some U-sentence. Therefore, by compactness, there exists a model $(B, S) \not \in \D$ such that $(A, R) \Rrightarrow_U (B, S)$. We can furthermore assume that $(B, S)$ is $|A|$-saturated. 

Now let $(a_\alpha)_{\alpha < |A|}$ enumerate all elements of $A$ and let us build a mapping $\mathfrak h: A \rightarrow B$ such that $(A, R, (a_\alpha)_{\alpha < |A|}) \Rrightarrow_U (B, S, (\mathfrak h(a_\alpha))_{\alpha < |A|})$. 

We can do so via transfinite induction on $\lambda<|A|$, by defining functions $\mathfrak h_\lambda$ each of which maps $a_{<\lambda} = (a_\alpha)_{\alpha < \lambda}$ into some $b_{< \lambda} = (b_\alpha)_{\alpha < \lambda}$ such that 
\begin{itemize}
\item $(A, R, a_{<\lambda}) \Rrightarrow_U (B, S, b_{<\lambda})$;
\item If $\lambda < \lambda'$ then $\mathfrak h_{\lambda} \subseteq \mathfrak h_{\lambda'}$.
\end{itemize}
Then by taking $\mathfrak h = \bigcup_{\lambda < |A|} \mathfrak h_\lambda$ we will obtain the required mapping. 

\begin{enumerate}
    \item For $\lambda = 0$, we already know that $(A, R) \Rrightarrow_U (B, S)$. 
    \item Suppose that $\lambda + 1$ is a successor ordinal and that, for all $\gamma < \lambda + 1$, mappings $\mathfrak h_\gamma$ exist with the required properties. Then in particular $\mathfrak h_{\lambda}$ exists and maps $\tuple a_{<\lambda}$ into some $\tuple b_{<\lambda}$ such that $(A, R, \tuple a_{<\lambda}) \Rrightarrow_U (B, S, \tuple b_{<\lambda})$. Now consider the element $a_{\lambda}$ and the theory 
    \[
        \Theta(R, \tuple a_{<\lambda}, a_{\lambda}) = \{\chi : \chi \text{ is a U-sentence}, (A, R, \tuple a_{<\lambda}, a_{\lambda}) \models \chi\}
    \]
    Now, if $\Theta_0$ is any finite subset of $\Theta$, by Proposition \ref{propo:Uconex} the expression $\xi_0 := \exists v \bigwedge_{\chi \in \Theta_0} \chi[v/a_{\lambda}]$ is equivalent to some $U$-sentence over $\{R, \tuple a_{<\lambda}\}$ and\\$(A, R, \tuple a_{<\lambda}) \models \xi_0$; and therefore, by induction hypothesis, $(B, S, \tuple b_{<\lambda}) \models \xi_0$ and there exists some $b' \in B$ such that $(B, S, \tuple b_{<\lambda}, b') \models \chi$ for all $\chi \in \Theta_0$. Since $(B, S)$ is $|A|$-saturated and $\lambda < |A|$, this means that there exists some $b_\lambda \in B$ such that $(B, S, \tuple b_{<\lambda}, b_\lambda) \models \Theta$. But then $(A, R, \tuple a_{<\lambda+1}) \Rrightarrow_U (B, S, \tuple b_{<\lambda+1})$, and we can define $\mathfrak h_{\lambda+1} = \mathfrak h \cup \{(a_\lambda, b_\lambda)\}$. 
    \item Suppose that $\lambda$ is a limit ordinal and that, for all $\gamma < \lambda$, mappings $\mathfrak h_{\gamma}$ exist with the required properties. Then take $\mathfrak h_\lambda = \bigcup_{\gamma < \lambda} \mathfrak h_\beta$. $\\$
    
    The mapping $\mathfrak h$ thus built is in particular an embedding of $(A, R)$ into $(B, S)$. Indeed, it is injective, because expressions of the form $a_i \not = a_j$ are $U$-sentences; and furthermore, for all $k$-tuples $\tuple c$ of elements in $A$, $\tuple c \in R \Leftrightarrow (\mathfrak h(c_i))_{i=1\ldots k} \in S$  since the sentences $R \tuple c$ and $\forall \tuple y (R \tuple y \rightarrow \tuple y \not = \tuple c)$ (which is equivalent to $\lnot R \tuple c$) are also $U$-sentences. 
    
    Now, let $(A_1, R_1)$ be the image of $(A, R)$ over $\mathfrak h$. Then $(A_1, R_1)$ is isomorphic to $(A, R)$, and so in particular $(A_1, R_1) \in \D$, and furthermore $R_1 = S \cap (A_1)^k \subseteq S$. Note additionally that, for every $U$-sentence $\phi$ with constants $\tuple a$ in $A_1$, 
    $(A_1, R_1, \tuple a) \models \phi \Leftrightarrow (A, R, \mathfrak h^{-1}(\tuple a)) \models \phi \Rightarrow (B, S, \tuple a) \models \phi$
    and so every $U$-sentence with parameters in $A_1$ that holds of $(A_1, R_1)$ also holds in $(B, S)$. 
    
    Now consider any tuple $\tuple b \in S \backslash R_1$. I state that there exists some $(A_2, R_2) \succeq (A_1, R_1)$ such that $\tuple b \in R_2$, as per Figure \ref{fig:add_b}. Indeed, consider the elementary diagram $\{\phi(R_2, \tuple a): \tuple a \subseteq A_1, \phi \in \FO, (A_1, R_1, \tuple a) \models \phi(R_1, \tuple a)\}$ of $(A_1, R_1)$ together with the sentence $R_2 \tuple b$. If this theory was not satisfiable, by compactness there would exist some tuple $\tuple a$ of elements of $A_1$ and some sentence $\phi(R_1, \tuple a)$ such that $(A_1, R_1, \tuple a) \models \phi(R_1, \tuple a)$ and $\phi(R, \tuple a) \models \lnot R \tuple b$. But then, if $\tuple t$ is the tuple of terms obtained from $\tuple b$ by replacing every constant symbol not in $\tuple a$ with some new variable, we have that $\phi(R, \tuple a) \models \forall \tuple z (\lnot R \tuple t)$, where $\tuple z$ lists all newly introduced variables,\footnote{This follows from the so-called "Lemma on Constants" (e.g. Lemma 2.3.2. of \cite{hodges97b}): if $T \models \phi(\tuple c)$, where $\tuple c$ is a tuple of constants not occurring in $T$, then $T \models \forall \tuple x \phi(\tuple x)$.} and so $(A_1, R_1, \tuple a) \models \forall \tuple z (\lnot R_1 \tuple t)$ and therefore $(A_1, R_1, \tuple a) \models \forall \tuple y (R_1 \tuple y \rightarrow \forall \tuple z(\tuple y \not = \tuple t))$. But  since all $U$-sentences with parameters in $A_1$ that are true of $(A_1, R_1)$ are also true of $(B, S)$ we would then have in particular that $(B, S, \tuple a) \models \forall \tuple y (S \tuple y \rightarrow \forall \tuple z(\tuple x \not = \tuple t))$, which is impossible because $\tuple b \in S$.  Therefore the theory is satisfiable, and any model $(A_2, R_2)$ of it is an elementary extension of $(A_1, R_1)$ that contains $\tuple b$, as required.
    
    	 \begin{figure}
	     \begin{center}
		\begin{tikzpicture}
		     \draw[black, thick] (0,-1.2) rectangle (6,1.2);
		     \draw[black, thick] (3,1.2)--(3,-1.2);
		     %\draw[black, thick](0, -1.2) rectangle (8, 3.6);

		     %\draw[blue] (1.5,0.5) -- (5.5,0.5);
		     \draw[blue] (1.5, 1.2) -- (1.5,0.2);
		     \draw[blue] (1.5,0.2) -- (5.5,0.2);
		     \draw[blue] (5.5,1.2) to (5.5,0.2);
		     
		     %\draw[red] (4,0.5) to[out=-0,in=0,distance=25] (4,-0.5);
		     
		     \node at (2.7,-1){$A_1$};
		     \node at (5.7,-1){$A_2$};
		     \node[blue] at (2.7,0.7){$R_1$};
		     \node[blue] at (5.2,0.7){$R_2$};
		     
		     \node[red,circle,fill,inner sep=1pt] at (3.2, 0.4){};
		     \node[red] at (3.2, 0.7){$\tuple b$};
		     \draw[black, thick] (-1, -2) rectangle (4, 3); 
		     \node at (3.7,-1.7){$B$};
		     \draw[blue] (1.5, 1.2) -- (1.5, 2.8);
		     \draw[blue] (1.5, 2.8) to[out=0, in=90] (3.5, 0.2);
		     \node[blue] at (2.7,1.7){$S$};
            
		     %\node[red] at (4.4,0){$S_0$};
		\end{tikzpicture}
		\end{center}
			\caption{
				$(A_1, R_1)$ is embedded in $(B, S)$, and $(A_1, R_1) \in \D$ while $(B, S) \not \in \D$, and $\tuple b \in S \backslash R_1$. Then we can build some $(A_2, R_2) \succeq (A_1, R_1)$ such that $\tuple b \in R_2$; and then, via Proposition \ref{propo:nostep}, we can show that $(A_2, R_1 \cup \{\tuple b\}) \in \D$. But then by domain independence $(B, R_1 \cup \{\tuple b\}) \in \D$ for any such $\tuple b$.
				}
		 \label{fig:add_b}
	 \end{figure}
    
    So we have that $(A_1, R_1) \preceq (A_2, R_2)$ and that $\tuple{b} \in R_2 \backslash R_1$. Then $(A_2, R_2) \in \D$, because $(A_1, R_1) \in \D$ and $(A_2, R_2)$ is elementarily equivalent to it. Therefore, $(A_2, R_1 \cup \{\tuple b\}) \in \D$: indeed, otherwise we would have that $(A_1, R_1) \in \D$, $(A_2, R_1 \cup \{\tuple b\}) \not \in \D$, $R_1 \cup \{\tuple b\} \subseteq R_2$ and $(A_2, R_2) \succeq (A_1, R_1)$, which (taking $S_1 = R_1 \cup \{\tuple b\}$) is the configuration of Figure \ref{fig:forbidden_step} that cannot occur by Proposition \ref{propo:nostep}. Then, by domain independence, $(B, R_1 \cup \{\tuple b\}) \in \D$ as well.
    
    But if this is the case for all $\tuple b \in S \backslash R_1$ and $\D$ is union closed,  \\
     $(B, \bigcup_{\tuple b \in S \backslash R_1} R_1 \cup \{\tuple b\}) = (B, S) \in \D$
     which contradicts our assumption that $(B, S) \not \in \D$. 
\end{enumerate}
\end{proof}

We can now find a global  characterization of domain independent, union-closed, strongly first order dependencies via a simple application of compactness: 
\begin{theorem}
Let $\D$ be a domain independent, union-closed, strongly first order dependency. Then $\D(R)$ is equivalent to some finite disjunction of U-sentences over the signature $\{R\}$. 
\label{thm:SFO_UC}
\end{theorem}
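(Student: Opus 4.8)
The plan is to derive the statement from Proposition~\ref{propo:local_char} by a standard compactness argument. Since $\D$ is strongly first order it is in particular first order, so we may fix a first order sentence $\D(R)$ over the signature $\{R\}$ with $(A,R) \in \D \Leftrightarrow (A,R) \models \D(R)$ for every model $(A,R)$. Let $\Xi$ be the set of all U-sentences $\chi$ over $\{R\}$ such that $\chi \models \D(R)$, and consider the first order theory $T = \{\D(R)\} \cup \{\lnot \chi : \chi \in \Xi\}$ over the signature $\{R\}$.

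First I would argue that $T$ is unsatisfiable. If $(A,R) \models T$ then, since $(A,R) \models \D(R)$, we have $(A,R) \in \D$; so Proposition~\ref{propo:local_char} supplies a U-sentence $\chi$ over $\{R\}$ with $(A,R) \models \chi$ and $\chi \models \D(R)$. The latter means $\chi \in \Xi$, so $\lnot\chi \in T$ and $(A,R) \models \lnot\chi$, contradicting $(A,R) \models \chi$. Hence $T$ has no model.

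By the Compactness Theorem some finite subset of $T$ is already unsatisfiable, and therefore so is the (larger) set $\{\D(R)\} \cup \{\lnot\chi_1,\dots,\lnot\chi_n\}$ for suitable $\chi_1,\dots,\chi_n \in \Xi$. This says exactly that $\D(R) \models \chi_1(R) \vee \cdots \vee \chi_n(R)$. Conversely, since each $\chi_i \in \Xi$ we have $\chi_i(R) \models \D(R)$, hence $\chi_1(R) \vee \cdots \vee \chi_n(R) \models \D(R)$. Therefore $\D(R)$ is equivalent to the finite disjunction of U-sentences $\chi_1(R) \vee \cdots \vee \chi_n(R)$, which is what was required. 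The degenerate case $n = 0$ does not occur, since union closure gives $\D$ the empty team property and hence $\D$ is nonempty, so Proposition~\ref{propo:local_char} genuinely applies to some model.

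I do not expect a real obstacle in this last step: the entire force of the theorem sits in Proposition~\ref{propo:local_char} (which itself rests on Proposition~\ref{propo:nostep} and Lemma~\ref{lemma:nochain}), and what remains is the routine move of turning a "pointwise" cover of $\D$ by U-sentences into a finite one via compactness. The only point worth a second glance is signature bookkeeping — one must use that the U-sentences produced by Proposition~\ref{propo:local_char} contain no constant symbols, so that $T$ really is a theory over $\{R\}$ and the model returned by compactness has the form $(A,R)$.
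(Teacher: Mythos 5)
Your proof is correct and follows essentially the same route as the paper: apply Proposition~\ref{propo:local_char} to show that $\{\D(R)\}$ together with the negations of the relevant U-sentences is unsatisfiable, then extract a finite disjunction by compactness. The only (immaterial) difference is that the paper indexes the negated U-sentences by countable models of $\D$ and invokes L\"owenheim--Skolem to pass from ``no countable models'' to ``no models,'' whereas you take all U-sentences over $\{R\}$ entailing $\D(R)$ and rule out models of every cardinality directly.
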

\begin{proof}
For each countable $(A, R) \in \D$, let $\chi_{A,R}(R)$ be a U-sentence over the signature $\{R\}$ such that $(A, R) \models \chi_{A,R}(R)$ and $\chi_{A,R}(R) \models \D(R)$. 

Then the theory $\{\lnot \chi_{A,R}(R) : (A, R) \in \D, A \text{ countable}\} \cup \{\D(R)\}$ 
has no countable models, and therefore it has no models at all; and so, by compactness, there exists a finite number of U-sentences $\chi_1 \ldots \chi_n$ such that $\D(R) \models \bigvee_{i=1}^n \chi_i(R)$. But every $\chi_i(R)$ entails $\D(R)$, and so $\D(R)$ is equivalent to $\bigvee_{i=1}^n \chi_i(R)$. 
\end{proof}
%Two corollaries of this result follow easily and are of some interest: 
\begin{corollary}
Let $\D$ be a domain independent, union-closed dependency. Then $\D$ is strongly first order if and only if it is definable in terms of constancy atoms, nonemptiness atoms and global disjunctions. 
\label{coro:SFO_UC_def}
\end{corollary}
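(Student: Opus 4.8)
The plan is to establish the two directions of the biconditional separately. The implication ``definable $\Rightarrow$ strongly first order'' is immediate from what has already been proved: the nonemptiness atom $\NE$ is upwards closed and first order, hence $\NE \in \DD^\uparrow$, so $\FO(=\!\!(\cdot), \NE, \sqcup)$ is contained in $\FO(=\!\!(\cdot), \DD^\uparrow, \sqcup)$. If $\D$ is definable from constancy atoms, nonemptiness atoms and global disjunctions then, given any sentence of $\FO(\D)$, replacing each instance of $\D$ by its defining formula produces an equivalent sentence of $\FO(=\!\!(\cdot), \NE, \sqcup)$, which by Proposition~\ref{propo:up_const_global} is equivalent to a first order sentence; so $\D$ is strongly first order.

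For the converse, suppose $\D$ is $k$-ary, domain independent, union closed and strongly first order. By Theorem~\ref{thm:SFO_UC}, $\D(R)$ is equivalent to a finite disjunction $\bigvee_{i=1}^{n}\chi_i(R)$ of U-sentences over $\{R\}$; discarding unsatisfiable disjuncts, I may assume each $\chi_i$ is satisfiable. Since the signature is just $\{R\}$, I can write $\chi_i = \exists \tuple x\bigl(\eta_i'(\tuple x) \wedge \bigwedge_{\tuple u \in U_i} R\tuple u \wedge \forall \tuple y(R\tuple y \rightarrow \theta_i(\tuple x, \tuple y))\bigr)$, where $\eta_i'(\tuple x)$ is a conjunction of (in)equalities among the variables $\tuple x$, $U_i$ is a finite set of $k$-tuples of variables from $\tuple x$, and $\theta_i$ is a quantifier-free pure-equality formula. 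Because $\D$ is first order, $\M \models_X \D\tuple w$ holds iff $(M, X(\tuple w)) \models \bigvee_i \chi_i(R)$, for every $\M$ and $X$. So it suffices to exhibit, for each $i$, an $\FO(=\!\!(\cdot), \NE)$ formula $\psi_i(\tuple w)$ (with $|\tuple w| = k$) such that $\M \models_X \psi_i(\tuple w)$ iff $(M, X(\tuple w)) \models \chi_i(R)$ for every \emph{nonempty} team $X$, and then to check that $\bigsqcup_{i=1}^n \psi_i(\tuple w)$ is true on every empty team; this last point, together with the empty team property (which union closure implies), gives $\D\tuple w \equiv \bigsqcup_{i=1}^n \psi_i(\tuple w)$, exhibiting $\D$ as definable from constancy atoms, nonemptiness atoms and global disjunctions.

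The candidate is
\[
  \psi_i(\tuple w) := \exists \tuple x\Bigl(=\!\!(\tuple x) \,\wedge\, \eta_i'(\tuple x) \,\wedge\, \bigwedge_{\tuple u \in U_i}\bigl((\tuple w = \tuple u \wedge \NE(\tuple w)) \vee \top\bigr) \,\wedge\, \theta_i(\tuple x, \tuple w)\Bigr),
\]
with $\top$ a first order tautology. I would argue that, over a nonempty team $X$, the prefix $\exists \tuple x(=\!\!(\tuple x) \wedge \cdots)$ forces $\tuple x$ to take a single value $\tuple a \in M^{|\tuple x|}$ across the team; the disjunct $(\tuple w = \tuple u \wedge \NE(\tuple w)) \vee \top$ is then satisfied precisely when the tuple obtained from $\tuple u$ by substituting $\tuple a$ lies in $X(\tuple w)$, which is exactly the content of the positive atom $R\tuple u$ under $R := X(\tuple w)$; and, by flatness (Proposition~\ref{propo:flat}), the first order conjuncts $\eta_i'(\tuple x)$ and $\theta_i(\tuple x, \tuple w)$ say respectively that $\eta_i'(\tuple a)$ holds in $M$ and that $\theta_i(\tuple a, s(\tuple w))$ holds for every $s \in X$, i.e.\ that $\forall \tuple y(R\tuple y \rightarrow \theta_i(\tuple x, \tuple y))$ holds under $R := X(\tuple w)$. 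Combining these yields $\M \models_X \psi_i(\tuple w)$ iff $(M, X(\tuple w)) \models \chi_i(R)$ on nonempty teams. The step I expect to be the main obstacle is the empty-team bookkeeping: since $\NE$ fails on the empty team, $\psi_i$ holds on the empty team exactly when $U_i = \emptyset$, i.e.\ when $\eta_i$ has no $R$-atom, so I must verify that at least one $\chi_i$ has no $R$-atom. This follows because union closure forces $(M, \emptyset) \in \D$ for all $M$, hence $\bigvee_i \chi_i$ holds on $(M, \emptyset)$ with $R$ empty, and for an infinite (or sufficiently large) $M$ the only satisfiable disjuncts that can hold with $R$ empty are those containing no $R$-atom. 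Granting this, $\bigsqcup_i \psi_i$ is true on all empty teams, matching $\D\tuple w$ there, while on nonempty teams it is equivalent to $\bigvee_i \chi_i(R)$ and hence to $\D\tuple w$; the remaining verifications are routine structural arguments about the $\psi_i$.
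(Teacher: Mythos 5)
Your proof is correct and follows essentially the same route as the paper: the easy direction via Proposition~\ref{propo:up_const_global} with $\NE \in \DD^\uparrow$, and the converse via Theorem~\ref{thm:SFO_UC} together with the same translation gadget replacing each positive atom $R\tuple u$ by $(\tuple w = \tuple u \wedge \NE(\tuple w)) \vee \top$ under a constancy-guarded existential. Your explicit empty-team check (that union closure forces some disjunct $\chi_i$ to contain no $R$-atom, so that $\bigsqcup_i \psi_i$ holds on the empty team) is a welcome extra precaution the paper glosses over; the only nitpick is that $\theta_i$ need not be quantifier-free, but flatness handles arbitrary first order $\theta_i$ anyway.
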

\begin{proof}
Suppose that $\D$ is strongly first order. Then by Theorem \ref{thm:SFO_UC}, $\D(R)$ is equivalent to some $\bigvee_{i=1}^n \chi_i(R)$ where each $\chi_i(R)$ is a $U$-sentence over the signature $\{R\}$.  Now let $\tuple w$ be a $k$-tuple of distinct variables not occurring in any $\chi_i$; and for each $\chi_i =  \exists \tuple x(\eta(\tuple x) \wedge \forall \tuple y (R \tuple y \rightarrow \theta(\tuple x, \tuple y)))$, let
\[
    \chi'_i(\tuple w) := \exists \tuple x(=\!\!(\tuple x) \wedge \eta'(\tuple x, \tuple w) \wedge \theta(\tuple x, \tuple w))
\]
where $\eta'(\tuple x, \tuple w)$ is obtained from $\eta(\tuple x)$ by replacing each conjunct of the form $R \tuple t$ with $(\top \vee (\NE(\tuple w) \wedge \tuple t = \tuple w))$.\footnote{Here $\top$ can be any literal that is always true, such as $w = w$ for some $w \in \tuple w$.}

I state that, for all models $\M$ and teams $X$ over it with domain containing $\tuple w$, $\M \models_X \D \tuple w$ if and only if $\M \models_X \bigsqcup_{i=1}^n \chi'_i(\tuple w)$.   If I can prove this then I am done: indeed, by Proposition \ref{propo:up_const_global} and by the fact that $\NE \subseteq \DD^\uparrow$ every dependency that is definable in $\FO(=\!\!(\cdot), \NE, \sqcup)$ is strongly first order. \vskip 1em

It suffices to prove that, for all models $\M$,\footnote{Note that the signature of $\M$ does not necessarily have to include the symbol $R$, which is instead interpreted as $X(\tuple w)$ when evaluating $\chi_i(R)$; and accordingly, $\chi'_i(\tuple w)$ is a $\FO(=\!\!(\cdot), \NE)$ formula over the \emph{empty} signature.} and all teams $X$ over $\M$ whose domain contains $\tuple w$, $\M \models_X \chi'_i(\tuple w)$ if and only if $(M, X(\tuple w)) \models \chi_i(R)$. 

Suppose that $\M \models_X \chi'_i(\tuple w)$: then there exists a tuple $\tuple a$ of elements such that, for $Y = X[\tuple a/\tuple x]$,\footnote{More formally: $Y = X[H_1/x_1]\ldots [H_k/x_k]$, where each $H_j$  picks the same singleton $\{a_j\}$ for all assignments in its domain -- indeed, only choice functions of this type can ensure that $\M \models_Y =\!\!(\tuple x)$.} $\M \models_Y \eta'(\tuple x, \tuple w) \wedge \theta(\tuple x, \tuple w)$. 

Then $(M, X(\tuple w)) \models \eta(\tuple a) \wedge \forall \tuple y(R \tuple y \rightarrow \theta(\tuple a, \tuple y))$. Indeed, 
\begin{enumerate}
    \item If some literal of the form $x_j = x_{j'}$ [$x_j \not = x_{j'}$] occurs among the conjuncts of $\eta$, it occurs unchanged also in $\eta'$, and since $\M \models_Y \eta$ we have that $a_j = a_{j'}$ [$a_j \not = a_{j'}$]; 
    \item If some literal of the form $R \tuple t$ occurs among the conjuncts of $\eta$, where $\tuple t$ is a tuple of variables in $\tuple x$, the expression $\top \vee (\NE(\tuple w) \wedge \tuple t = \tuple w)$ occurs among the conjuncts of $\eta'$. Therefore, $Y = Y_1 \cup Y_2$, where $Y_2 \not = \emptyset$ and $s(\tuple t) = s(\tuple w)$ for all $s \in Y_2$. But $s(\tuple t) = t[\tuple a/\tuple x]$ for all $s \in Y$. Therefore, $t[\tuple a/\tuple x] \in Y_2(\tuple w) \subseteq X(\tuple w)$. 
    \item Since $\M \models_{X[\tuple a/\tuple x]} \theta(\tuple x, \tuple w)$ and $\theta$ is first order, by Proposition \ref{propo:flat} for all $s \in X$ we have that $\M \models_s \theta(\tuple a, \tuple w)$ in the usual Tarskian sense. This means that, for all values $\tuple m$ that $\tuple w$ takes for some assignment in $X$, $\M \models \theta(\tuple a, \tuple m)$; and therefore $(M, X(\tuple w)) \models \forall \tuple y(R \tuple y \rightarrow \theta(\tuple a, \tuple y))$. 
\end{enumerate}
Therefore $(M, X(\tuple w)) \models \eta(\tuple a) \wedge \forall \tuple y(R\tuple y \rightarrow \theta(\tuple a, \tuple y))$ and hence $(M, X(\tuple w)) \models \exists \tuple x(\eta(\tuple x) \wedge \forall \tuple y(R \tuple y \rightarrow \theta(\tuple x, \tuple y)))$ as required. 

Conversely, suppose that $(M, X(\tuple w)) \models \chi_i(R)$. Then there exists a tuple $\tuple a$ of elements of $M$ such that $(M, X(\tuple w)) \models \eta(\tuple a) \wedge \forall \tuple y(R \tuple y \rightarrow \theta(\tuple a, \tuple y))$. Now let $Y = X[\tuple a/\tuple x]$: clearly, $\M \models_Y =\!\!(\tuple x)$. Moreover, $\M \models_Y \eta'(\tuple x, \tuple w) \wedge \theta(\tuple x, \tuple w)$: indeed, 
\begin{enumerate}
    \item If some literal of the form $x_j = x_{j'}$ [$x_j \not = x_{j'}$] occurs among the conjuncts of $\eta'$, the same conjunct also occurs in $\eta$. Therefore $a_j = a_{j'}$ [$a_j \not = a_{j'}$] and $\M \models_Y x_j = x_{j'}$ [$\M \models_Y x_j \not = x_{j'}$]. 
    \item If some expression of the form $\top \vee (\NE(\tuple w) \wedge \tuple t = \tuple w)$ is among the conjuncts of $\eta'$, $R \tuple t$ is among the conjuncts of $\eta$. Therefore, $\tuple t[\tuple a/\tuple x] \in X(\tuple w)$, and so there exists some $s \in Y = X[\tuple a/\tuple x]$ such that $s(\tuple w) = \tuple t[\tuple a/\tuple x] = s(\tuple t)$. But then $Y = Y \cup \{s\}$, $\M \models_Y \top$, and $\M \models_{\{s\}} \NE(\tuple w) \wedge \tuple t = \tuple w$; and therefore $\M \models_Y \top \vee (\NE(\tuple w) \wedge \tuple t = \tuple w)$, as required. 
    \item Since $(M, X(\tuple w)) \models \forall \tuple y (R\tuple y \rightarrow \theta(\tuple a, \tuple m))$, for all $\tuple m \in X(\tuple w)$ we have that $\M \models \theta(\tuple a, \tuple m)$. Therefore, for all $s \in X$, $\M \models_{s[\tuple a/\tuple x]} \theta(\tuple x, \tuple w)$ in the usual Tarskian sense; and since $Y = X[\tuple a/\tuple x]$, by Proposition \ref{propo:flat} we have that $\M \models_{Y} \theta(\tuple x, \tuple w)$. 
\end{enumerate}
Therefore, $\M \models_{X[\tuple a/\tuple x]} \eta'(\tuple x, \tuple w) \wedge \theta(\tuple x, \tuple w)$ and $\M \models_X \chi'_i(\tuple w)$. 

Thus, for every $\chi_i(R)$  we can find a $\FO(=\!\!(\cdot), \NE)$ formula $\chi'_i(\tuple w)$ such that $\M \models_X \chi'_i(\tuple w)$ if and only if $(M, X(\tuple w)) \models \chi_i(R)$.
But $\D(R)$ is equivalent to $\bigvee_{i=1}^t \chi_i(R)$. Therefore $(M, X(\tuple w)) \models \D(R)$ if and only if there exists some $i \in 1 \ldots t$ such that $\M \models_X \chi'_i(\tuple w)$, that is, if and only if $\M \models_X \bigsqcup_{i=1}^t \chi'_i(\tuple w)$; and thus $\D$ is definable in $\FO(=\!\!(\cdot), \NE, \sqcup)$. 
\end{proof}
\begin{corollary}
Let $\DD$ be a family of domain-independent dependencies each of which is union closed, upwards closed, or downwards closed\footnote{Or  non-jumping.}. Then $\DD$ is strongly first order if and only if all $\D \in \DD$ are individually strongly first order. 
\end{corollary}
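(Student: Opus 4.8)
The plan is to prove the two directions separately. The left-to-right implication is immediate: if $\DD$ is strongly first order then for any single $\D \in \DD$ we have $\FO(\D) \subseteq \FO(\DD)$, so every sentence of $\FO(\D)$ is already a sentence of $\FO(\DD)$ and hence equivalent to a first order sentence; thus $\D$ is strongly first order.

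For the converse, suppose every $\D \in \DD$ is individually strongly first order. Since each such $\D$ is also domain independent, I would split into cases according to which of the listed closure properties it enjoys. If $\D$ is union closed, then by Corollary~\ref{coro:SFO_UC_def} it is definable in $\FO(=\!\!(\cdot), \NE, \sqcup)$; and since $\NE$ is itself first order and upwards closed, i.e. $\NE \in \DD^\uparrow$, $\D$ is in fact definable in $\FO(=\!\!(\cdot), \DD^\uparrow, \sqcup)$. If $\D$ is downwards closed, or more generally non-jumping, then — being domain independent and strongly first order — it is definable in $\FO(=\!\!(\cdot), \DD^\uparrow, \sqcup)$ directly by Theorem~\ref{thm:nonjumping}. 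Finally, if $\D$ is upwards closed, then, being strongly first order and hence first order, it lies in $\DD^\uparrow$ and is trivially definable there. In every case each atom $\D \tuple x$ with $\D \in \DD$ is definable, as a \emph{formula} and uniformly across all models (including those whose signature does not mention the relation symbol used to interpret $\D$), in $\FO(=\!\!(\cdot), \DD^\uparrow, \sqcup)$. A routine structural induction on formulas — replacing each dependency atom by its definition and renaming the auxiliary quantified variables to be fresh at each occurrence — then shows that every formula, and in particular every sentence, of $\FO(\DD)$ is equivalent to some sentence of $\FO(=\!\!(\cdot), \DD^\uparrow, \sqcup)$. By Proposition~\ref{propo:up_const_global} every such sentence is equivalent to a first order sentence, and so $\DD$ is strongly first order.

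The only point requiring care — and hence the main (minor) obstacle — is that the inductive replacement step needs \emph{formula}-level definability that is valid over arbitrary models, rather than mere sentence-level equivalence or definability restricted to structures interpreting a distinguished predicate; but both Corollary~\ref{coro:SFO_UC_def} and Theorem~\ref{thm:nonjumping} are stated in exactly that form, so the argument goes through without difficulty.
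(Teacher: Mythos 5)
Your proof is correct and follows essentially the same route as the paper: both directions are handled the same way, with the converse reducing each atom (union-closed via Corollary~\ref{coro:SFO_UC_def}, downwards-closed/non-jumping via Theorem~\ref{thm:nonjumping}, upwards-closed directly via membership in $\DD^\uparrow$) to a definition in $\FO(=\!\!(\cdot), \DD^\uparrow, \sqcup)$ and then invoking Proposition~\ref{propo:up_const_global}. Your explicit remark that the cited results give formula-level (not merely sentence-level) definability is a sound point of care that the paper leaves implicit.
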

\begin{proof}
The left to right direction is obvious. 

For the right to left direction, suppose that all $\D \in \DD$ are individually strongly first order. By Theorem \ref{thm:nonjumping}, every downwards closed (or non-jumping) $\D \in \DD$ is definable in $\FO(=\!\!(\cdot), \DD^\uparrow, \sqcup)$, and by Corollary \ref{coro:SFO_UC_def} every union closed $\D \in \DD$ is definable in $\FO(=\!\!(\cdot), \NE, \sqcup)$, and all upwards closed first order dependencies (including all $\NE$ dependencies) are in $\DD^\uparrow$. Therefore, every sentence of $\FO(\DD)$ is equivalent to some sentence of $\FO(=\!\!(\cdot), \DD^\uparrow, \sqcup)$; but by Proposition \ref{propo:up_const_global} every such sentence is equivalent to some first order sentence, and therefore $\DD$ is strongly first order. 
\end{proof}
\section{Conclusion}
In this work we found a full characterization of strongly first order dependencies for the case of dependencies that are also domain-independent and union-closed.  This essentially completes the characterization of strongly first order dependencies for all the main classes of dependencies studied in the literature. 

A general characterization, even modulo domain independence, of strongly first order dependencies is however still missing; and it is the hope of the author that the ideas and techniques developed in this work may be of some use for proving such a characterization. 
Additionally, it would be interesting to explore the same question with respect to richer languages than First Order Logic, as it was done in \cite{galliani2021doubly} for the case of First Order Logic (with Team Semantics) plus a restricted form of contradictory negation. For example, which dependencies can be added to Inclusion Logic without increasing its expressive power? This is not fully known at the moment, although it is known that no first order union-closed dependency increases the expressive power of Inclusion Logic \cite{gallhella13}. 

Finally, there exist quantitative variants of Team Semantics such as \emph{Probabilistic Team Semantics} \cite{durand2018probabilistic} and \emph{Multiteam Semantics} \cite{gradel2022logics}. The same question about which dependencies increase the expressive power of First Order Logic and which ones do not can be also asked for these interesting generalizations of the basic framework of Team Semantics, and its answer would also in their case provide a good starting point for the classification of the corresponding extensions of First Order Logic. 
\bibliographystyle{splncs04}
\bibliography{biblio}
\newpage
\appendix
\section*{Appendix}
In this appendix I report proof details that have not been included in the article. 
\subsection*{Preservation of Closure Properties}
In Section \ref{sect:prelims} it was stated that if all $\D \in \DD$ are downwards closed [union closed, with the empty team property] then all formulas of $\FO(\DD)$ have this property. 

This can be proven by structural induction on $\phi$. The base cases (i.e. if $\phi$ is a dependence atom $\D \tuple v$ or a first order literal $\alpha$) are obvious (note that by Proposition \ref{propo:flat}, first order literals are downwards closed \emph{and} union closed \emph{and} have the empty team property). 

The other cases are as follows: 
\begin{itemize}
    \item Let $\phi = \phi_1 \vee \phi_2$. 
    \begin{description}
        \item[Empty Team:] By induction hypothesis, for all models $\M$ we have that $\M \models_\emptyset \phi_1$ and $\M \models_\emptyset \phi_2$. Then since $\emptyset = \emptyset \cup \emptyset$, $\M \models_\emptyset \phi_1 \vee \phi_2$.
        \item[Downwards Closure:] Suppose that $\M \models_X \phi$ and $Y \subseteq X$. Then $X = X_1 \cup X_2$, where $\M \models_{X_1} \phi_1$ and $\M \models_{X_2} \phi_2$. Now let $Y_1 = Y \cap X_1$ and $Y_2 = Y \cap X_2$: by induction hypothesis, $\M \models_{Y_1} \phi_1$ and $\M \models_{Y_2} \phi_2$, and so --- since $Y = Y_1 \cup Y_2$ -- $\M \models_Y \phi_1 \vee \phi_2$.
        \item[Union Closure:] Suppose that $\M \models_{X_i} \phi$ for all $i \in I$. Then every $X_i$ is of the form $Y_1 \cup Z_i$, where $\M \models_{Y_i} \phi_1$ and $\M \models_{Z_i} \phi_2$. Now let $Y = \bigcup_{i \in I} Y_1$ and $Z = \bigcup_{i \in I} Z_i$: then by induction hypothesis $\M \models_Y \phi_1$ and $\M \models_Z \phi_2$. Therefore, for $X = \bigcup_{i \in I} X_i = Y \cup Z$, $\M \models_X \phi_1 \vee \phi_2$ as required.
    \end{description}
    \item Let $\phi = \phi_1 \wedge \phi_2$. 
    \begin{description}
        \item[Empty Team:] By induction hypothesis $\M \models_\emptyset \phi_1$ and $\M \models_\emptyset \phi_2$, and so $\M \models_\emptyset \phi_1 \wedge \phi_2$. 
        \item[Downwards Closure:] If $\M \models_X \phi_1$ and $\M \models_X \phi_2$ and $Y \subseteq X$, by induction hypothesis $\M \models_Y \phi_1$ and $\M \models_Y \phi_2$. But then $\M \models_Y \phi_1 \wedge \phi_2. $
        \item[Union Closure:] If $\M \models_{X_i} \phi$ for all $i \in I$, $\M \models_{X_i} \phi_1$ and $\M \models_{X_i} \phi_2$ for all such $i$. Now let $X = \bigcup_{i \in I} X_i$: by induction hypothesis, $\M \models_X \phi_1$ and $\M \models_X \phi_2$, and so $\M \models_X \phi_2 \wedge \phi_2$. 
    \end{description}
    \item Let $\phi = \exists v \psi$. 
    \begin{description}
        \item[Empty Team:] By induction hypothesis, $\M \models_\emptyset \psi$. Now let $H_0$ be the trival choice function $\emptyset \rightarrow \mathcal P(M) \backslash \{\emptyset\}$: then $\emptyset[H_0/v] = \emptyset$, and so $\M \models_{\emptyset[H_0/v]} \psi$, and so $\M \models_\emptyset \exists v \psi$.
        \item[Downwards Closure:] If $\M \models_X \exists v \psi$ then there exists some $H: X \rightarrow \mathcal P(M) \backslash \{\emptyset\}$ such that $\M \models_{X[H/v]} \psi$. Now if $Y \subseteq X$, let $H_{|Y}: Y \rightarrow \mathcal P(M) \backslash \{\emptyset\}$ be the restriction of $H$ to $Y$: then $Y[H_{|Y}/v] = \{s[m/v]: s \in Y, m \in H_{|Y}(s)\} \subseteq X[H/v]$, and so by induction hypothesis $\M \models_{Y[H_{|Y}/v} \psi$ and finally $\M \models_Y \exists v \psi$. 
        \item[Union Closure:] Suppose that $\M \models_{X_i} \exists v \psi$ for all $i \in I$. Then for all $i \in I$ there exists some $H_i: X_i \rightarrow \mathcal P(M) \backslash \{\emptyset\}$ such that $\M \models_{X_i[H_i/v]} \psi$. Now let $X = \bigcup_{i \in I} X_i$ and let $H: X \rightarrow \mathcal P(M) \backslash \{\emptyset\}$ be defined as 
        \[
            H(s) = \bigcup \{H_i(s) : i \in I, s \in X_i\}. 
        \]
        Then $X[H/v] = \{s[m/v]: \exists i \in I \text{ s.t. } s \in X_i \text{ and } m \in H_i(s)\} = \bigcup_{i \in I} \{s[m/v] : s \in X_i, m \in H_i(s)\} = \bigcup_{i \in I} X_i[H_i/v]$. 
        
        By induction hypothesis this implies that $\M \models_{X[H/v]} \psi$, and therefore $\M \models_X \exists v \psi$.
    \end{description}
    \item Let $\phi = \forall v \psi$. 
    \begin{description}
    \item[Empty Team:] By induction hypothesis, $\M \models_\emptyset \psi$. But $\emptyset[M/v] = \emptyset$, and so $\M \models_\emptyset \forall v \psi$. 
    \item[Downwards Closure:] If $\M \models_X \forall v \psi$ then $\M \models_{X[M/v]} \psi$. Now if $Y \subseteq X$, $Y[M/v] \subseteq X[M/v]$: so by induction hypothesis 
    \item[Union Closure:] If $\M \models_{X_i} \forall v \psi$ for all $i \in I$ then $\M \models_{X_i[M/v]} \psi$ for all such $i$. 
    Now let $X = \bigcup_{i \in I} X_i$: then $X[M/v] = \{s[m/v]: \exists i \in I \text{ s.t. } s \in X_i, m \in M\} = \bigcup_i X_i[M/v]$, and so by induction hypothesis $\M \models_{X[M/v]} \psi$ and finally $\M \models_X \forall v \psi$.
    \end{description}
\end{itemize}
\subsection*{Relativizability and Domain Independence}
Let $\D$ be any dependency and let $P$ be some fixed unary predicate symbol. Then, for all models $\M$ that contain the symbol $P$ in their signature, the semantics of the \emph{relativized dependency} $\D^{(P)}$ is defined by.the rule
\begin{description}
\item[TS-$\D^{(P)}$:] $\M \models_X \D^{(P)} \tuple x$ if and only if $(P^{\M}, X(\tuple x)) \in \D$. 
\end{description}
A dependency $\D$ is said to be \emph{relativizable} if every sentence of $\FO(\D^{(P)})$ (that is, of First Order Logic augmented with the relativized atoms $\D^{(P)} \tuple x$, for some \emph{fixed} choice of the predicate $P$) is equivalent to some sentence of $\FO(\D)$. 
\begin{proposition}
If $\D$ is domain independent, then it is relativizable. 
\end{proposition}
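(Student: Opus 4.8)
The plan is to show that any sentence $\phi$ of $\FO(\D^{(P)})$ can be rewritten as an equivalent sentence of $\FO(\D)$ by working inside the "subuniverse" picked out by $P$. The intuition is simple: since $\D$ is domain independent, whether $(P^{\M}, X(\tuple x)) \in \D$ depends only on $\Fld(X(\tuple x))$, not on the full set $P^{\M}$; so as long as the tuples $\tuple x$ occurring in instances of the relativized atom are guaranteed to take values inside $P$, the atom $\D^{(P)}\tuple x$ behaves exactly like the plain atom $\D\tuple x$. The work is therefore to transform $\phi$ so that all the relevant variables are forced to range over $P$ while preserving truth.

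First I would handle the degenerate cases: if $P^{\M} = \emptyset$ then, by domain independence and the fact that union closure (hence the empty team property) need not hold for $\D$ in general, I would instead argue directly --- actually the cleaner route is the standard relativization-of-quantifiers construction. Given $\phi \in \FO(\D^{(P)})$ in negation normal form, define its relativization $\phi^{P}$ by recursion: relativize every existential quantifier $\exists v\,\psi$ to $\exists v\,(Pv \wedge \psi^{P})$ and every universal quantifier $\forall v\,\psi$ to $\forall v\,(Pv \rightarrow \psi^{P})$, leave literals alone, commute with $\wedge$ and $\vee$, and --- crucially --- replace each atom $\D^{(P)}\tuple x$ by the plain atom $\D\tuple x$. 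One then must check that $\M \models \phi$ (with the relativized semantics) if and only if $\M^{P} \models \phi^{P}$ in the ordinary sense, where $\M^{P}$ is $\M$ restricted to the universe $P^{\M}$ --- but that is not quite what we want, since $\phi^{P}$ still mentions $P$. The actual target is: $\M \models \phi$ iff $\M \models \phi^{P}$, using that a team witnessing a relativized quantifier lives inside $P$, so at every relativized atom $\D^{(P)}\tuple x$ the team $X$ has $X(\tuple x) \subseteq (P^{\M})^{k}$, whence $\Fld(X(\tuple x)) \subseteq P^{\M}$ and, by domain independence applied twice, $(P^{\M}, X(\tuple x)) \in \D \Leftrightarrow (\Fld(X(\tuple x)), X(\tuple x)) \in \D \Leftrightarrow (M, X(\tuple x)) \in \D$, i.e. $\M \models_{X} \D^{(P)}\tuple x \Leftrightarrow \M \models_{X} \D\tuple x$. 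The induction on the structure of $\phi$ then goes through routinely, the quantifier cases being exactly the classical relativization argument adapted to the team-semantic rules TS-$\exists$ and TS-$\forall$ (for TS-$\forall$ one uses that $X[P^{\M}/v]$ is the subteam of $X[M/v]$ satisfying $Pv$).

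The one genuine subtlety --- and the step I expect to be the main obstacle --- is the base of the induction when $P^{\M} = \emptyset$: then $X(\tuple x) = \emptyset$ for any team over $\M^{P}$, and whether $(M,\emptyset) \in \D$ need not coincide with $(\emptyset,\emptyset) \in \D$ unless one is careful, but in fact domain independence gives exactly $(M,\emptyset)\in\D \Leftrightarrow (\Fld(\emptyset),\emptyset) = (\emptyset,\emptyset) \in \D$, so the equivalence still holds; the real issue is rather that the relativized sentence $\phi^{P}$ must have the same truth value as $\phi$ starting from the singleton team $\{\epsilon\}$, and one has to make sure no free variable of any subformula is ever required to lie in an empty $P$ in a way that makes $\phi^{P}$ vacuously false while $\phi$ was true --- this is handled by noting that the relativized sentence is built so that $\phi$ genuinely asserts the existence of witnesses in $P$. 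Once the equivalence $\M \models \phi \Leftrightarrow \M \models \phi^{P}$ is established and $\phi^{P} \in \FO(\D)$, the proposition follows.
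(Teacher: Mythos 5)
Your key local observation is correct and is exactly the engine of the paper's proof: when $X(\tuple x) \subseteq (P^{\M})^{|\tuple x|}$, domain independence applied twice through $\Fld(X(\tuple x))$ gives $(P^{\M}, X(\tuple x)) \in \D \Leftrightarrow (M, X(\tuple x)) \in \D$. But the global construction you build around it does not work. The semantics of $\FO(\D^{(P)})$ does \emph{not} relativize the quantifiers of the logic to $P^{\M}$; it only changes the ambient domain used to evaluate the dependency atom, while \textbf{TS-$\exists$} and \textbf{TS-$\forall$} still range over all of $M$. Teams arising in the evaluation of $\phi \in \FO(\D^{(P)})$ therefore take values anywhere in $M$, and your claim that ``a team witnessing a relativized quantifier lives inside $P$'' has nothing to rest on. Consequently the asserted equivalence $\M \models \phi \Leftrightarrow \M \models \phi^{P}$ is false: already for $\phi = \forall v\, Qv$, which contains no dependency atoms, $\phi^{P} = \forall v\,(Pv \rightarrow Qv)$ holds in any model where $Q$ covers $P^{\M}$ but not all of $M$, while $\phi$ fails there. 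Relativizing the quantifiers changes the meaning of the sentence rather than preserving it.

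The repair is to drop the quantifier relativization entirely and work atom by atom, which is what the paper does. Observe that $\M \models_X \D^{(P)} \tuple x$ already forces $X(\tuple x) \subseteq (P^{\M})^{|\tuple x|}$, since otherwise $(P^{\M}, X(\tuple x))$ is not a structure at all; combined with your two applications of domain independence, this shows that the atom $\D^{(P)} \tuple x$ is equivalent, as a formula under team semantics, to $\D \tuple x \wedge \bigwedge_{x \in \tuple x} P x$. Substituting this for every occurrence of the relativized atom yields the required $\FO(\D)$ sentence with no induction on $\phi$, and the case $P^{\M} = \emptyset$ that worries you needs no separate treatment.
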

\begin{proof}
It suffices to show that $\D^{(P)} \tuple x$ is definable in $\FO(\D)$ as $\D \tuple x \wedge \bigwedge_{x \in \tuple x} P x$. 

Suppose that $\M \models_X \D^{(P)} \tuple x$. Then $(P^\M, X(\tuple x)) \in \D$. Then, since $\D$ is domain-independent, $(M, X(\tuple x)) \in \D$, and so $\M \models_X \D \tuple x$. Moreover, for $(P^\M, X(\tuple x))$ to be a model it must be the case that $X(\tuple x) \subseteq (P^\M)^{|\tuple x|}$, i.e., $s(x) \in P^\M$ for all $s \in X$ and all $x \in \tuple x$. Therefore $\M \models_X  \bigwedge_{x \in \tuple x} P x$ as well, and so $\M \models_X \D \tuple x \wedge \bigwedge_{x \in \tuple x} P x$. 

Conversely, suppose that $\M \models_X \D \tuple x \wedge \bigwedge_{x \in \tuple x} P x$.  Then $(M, X(\tuple x)) \in \D$ and $X(\tuple x) \subseteq (P^\M)^{|\tuple x|}$, and so since $\D$ is domain independent $(P^{\M}, X(\tuple x)) \in \D$ as well. 

Therefore, every sentence $\phi \in \FO(\D^{(P)})$ is equivalent to the sentence $\phi' \in \FO(\D)$ obtained by replacing each relativized atom $\D^{(P)} \tuple x$ with the corresponding $\D \tuple x \wedge \bigwedge_{x \in \tuple x} P x$. 
\end{proof}
\subsection*{Non-Jumping Dependences and Downwards Closure}

\begin{definition}[$\D_{\max}$]
Let $\D$ be a dependency. Then $(M, R) \in \D_{\max}$ if and only if $(M, R) \in \D$ and there is no $S \supsetneq R$ such that $(M, S) \in \D$.
\end{definition}

\begin{proposition}[\cite{galliani2019nonjumping}, Proposition 22]
If $\D$ is strongly first order and $(M, R) \in \D$ then there exists some $R' \supseteq R$ such that $(M, R') \in \D_{\max}$. 
\label{propo:max_exists}
\end{proposition}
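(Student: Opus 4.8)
The final statement to prove is Proposition~\ref{propo:max_exists}: if $\D$ is strongly first order and $(M,R) \in \D$, then there exists some $R' \supseteq R$ with $(M,R') \in \D_{\max}$.

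The plan is to argue by contradiction, using compactness on an auxiliary first-order theory that attempts to build an infinite strictly increasing chain of relations all satisfying $\D$ over a fixed domain. Since $\D$ is strongly first order, it is in particular first order (as noted after the definition of strong first-orderness), so there is a first-order sentence $\D(R)$ capturing it. First I would set up a signature containing the original symbols of the model together with the domain $M$ as constants (or work with the elementary diagram of $(M,R)$), plus a sequence of $k$-ary relation symbols $R = R_0, R_1, R_2, \dots$, and write down the theory asserting $\D(R_i)$ for each $i$, the inclusions $\forall \tuple x (R_i \tuple x \rightarrow R_{i+1}\tuple x)$, and strict inequality $\exists \tuple x(R_{i+1}\tuple x \wedge \lnot R_i \tuple x)$ for each $i$. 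The point is that this theory being satisfiable would contradict strong first-orderness: a model of it would give, inside a single structure, a witness to an $\FO(\D)$-expressible property that cannot be first-order, much in the spirit of Lemma~\ref{lemma:nochain}. So the real content is to show that if no $R' \supseteq R$ with $(M,R') \in \D_{\max}$ exists, then this theory is finitely satisfiable — any finite fragment mentions only finitely many $R_i$, say up to $R_n$, and since $R$ is not contained in any maximal relation of $\D$ over $M$ we can keep strictly extending it $n$ times inside $(M,\cdot)$ itself, interpreting the remaining $R_j$ for $j > n$ all as $R_n$.

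Then I would spell out why satisfiability of the full theory contradicts strong first-orderness. Given a model of the theory, one extracts a structure in which $R_0 \subsetneq R_1 \subsetneq R_2 \subsetneq \cdots$ is an infinite strictly ascending chain with each $(M', R_i) \in \D$. Following the template of the proof of Lemma~\ref{lemma:nochain}, I would encode this chain by a single $(k+1)$-ary relation $\hat R = \{(i,\tuple m) : \tuple m \in R_i\}$ together with $<$ and successor on the index sort, and consider the $\FO(\D)$ sentence expressing ``there is an index $t$ such that $\bigcup_{i < t} R_i$, or $\bigcup_{i \le t} R_i$, does not satisfy $\D$'' versus ``every such union does satisfy $\D$'' — the non-first-order behaviour along the chain (strict increase forces the union to eventually escape $\D$, but only at a non-standard stage) yields, via Löwenheim--Skolem on an uncountable elementary extension, a contradiction with the assumed first-order equivalent of that $\FO(\D)$ sentence. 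This is exactly the mechanism already carried out in detail in Lemma~\ref{lemma:nochain}, so I would invoke that lemma's argument rather than redo it; in fact the cleanest route is probably to reduce directly to Lemma~\ref{lemma:nochain} by taking a trivial elementary chain $\A_i = \A_{i+1} = (M', R_{i+1})$ with $S_i = R_{i+1}$ — wait, that does not quite match since we need $(A_{i+1}, S_i) \notin \D$; instead one lets $S_i$ run through the strictly increasing relations and uses that the union over the standard indices does satisfy $\D$ while intermediate stages eventually do not.

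The main obstacle I anticipate is the bookkeeping in the finite-satisfiability step: correctly formulating the theory so that a finite subset only constrains finitely many $R_i$ and genuinely forces $n$ strict extensions, and making sure the hypothesis ``$R$ is in no $\D_{\max}$ over $M$'' really does deliver such a finite chain of strict extensions $R = R_0 \subsetneq R_1 \subsetneq \cdots \subsetneq R_n$ inside $(M, \cdot)$ — this uses that failure of the conclusion means every relation $\supseteq R$ in $\D$ has a strictly larger relation in $\D$, which lets us extend one step at a time, but one must check there is no subtlety about whether such extensions stay within $\D$ as a dependency over the \emph{same} domain $M$ (here domain independence is not needed, since we never change $M$). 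The second delicate point is citing Lemma~\ref{lemma:nochain} in exactly the right form; if the fit is not clean I would instead reprove the short Löwenheim--Skolem argument inline, as it is essentially identical to the one already given for that lemma.
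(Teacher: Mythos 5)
There is a genuine gap, and it is at the heart of the strategy rather than in the bookkeeping. (Note also that the paper itself does not prove this proposition --- it imports it by citation from \cite{galliani2019nonjumping} --- so there is no in-paper proof to match; the proposal must stand on its own, and it does not.) Your plan is: assuming no maximal extension of $R$ exists, use compactness to produce a structure containing an infinite strictly ascending chain $R_0 \subsetneq R_1 \subsetneq \cdots$ with every $(M', R_i) \in \D$, and then claim that the existence of such a chain contradicts strong first-orderness ``in the spirit of Lemma~\ref{lemma:nochain}.'' But an infinite strictly ascending chain lying entirely inside $\D$ over a fixed domain is perfectly compatible with strong first-orderness: take $\D = \NE$ and the chain $\{0\} \subsetneq \{0,1\} \subsetneq \cdots$ over $\mathbb N$. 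So satisfiability of the theory you write down (which asserts nothing beyond ``ascending chain in $\D$'') cannot by itself yield a contradiction. The engine of Lemma~\ref{lemma:nochain} is the \emph{interleaving} of relations $S_i \notin \D$ between consecutive relations $R_i, R_{i+1} \in \D$ --- it is the jumping in and out of $\D$ that produces non-first-order behaviour --- and your construction supplies no such witnesses. Your own parenthetical repair (``strict increase forces the union to eventually escape $\D$, but only at a non-standard stage'') is unjustified: the union of an increasing chain in $\D$ can remain in $\D$ (again $\NE$), and even granting that the hypothesis ``no maximal extension over $M$'' forces the union of a cofinal chain out of $\D$, that hypothesis is a second-order statement about \emph{all} supersets over $M$ and does not transfer through your compactness step to the new model $M'$ where the chain actually lives.

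Two further mismatches confirm that Lemma~\ref{lemma:nochain} cannot be invoked in the way you suggest. First, the lemma requires $(A_i, R_i) \preceq (A_{i+1}, R_{i+1})$ as structures in the signature $\{R\}$; two structures with the \emph{same} domain and $R_i \subsetneq R_{i+1}$ can never satisfy this, since elementary substructure forces $R_i = R_{i+1} \cap A_i^k = R_{i+1}$. So a ``trivial elementary chain'' with strictly growing relations over one domain is incoherent, as you half-noticed. Second, Lemma~\ref{lemma:nochain} assumes domain independence, whereas Proposition~\ref{propo:max_exists} does not; any reduction to that lemma would smuggle in a hypothesis the proposition does not grant you. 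The finite-satisfiability step of your argument (extending $R$ strictly $n$ times inside $(M,\cdot)$ using the failure of maximality) is fine, but it feeds into a theory whose satisfiability proves nothing. To make progress you would need to encode into a first-order (or $\FO(\D)$-expressible, hence first-order-equivalent) statement something that genuinely reflects the \emph{non-existence} of maximal extensions --- for instance a sentence whose truth forces some union or intermediate relation to leave $\D$ --- and that missing idea is precisely the content of the cited Proposition 22.
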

\begin{definition}[Non-Jumping Dependency]
A dependency $\D$ is non-jumping if and only if, whenever $(M, R) \in \D$, there exists some $R' \supseteq R$ such that 
\begin{itemize}
    \item $(M, R') \in \D_{\max}$; 
    \item For all relations $S$ with $R \subseteq S \subseteq R'$, $(M, S) \in \D$. 
\end{itemize}
\end{definition}
\begin{proposition}
If $\D$ is downwards closed and strongly first order then $\D$ is non-jumping. 
\end{proposition}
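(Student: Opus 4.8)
The statement to prove is: if $\D$ is downwards closed and strongly first order then $\D$ is non-jumping.

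The plan is to unpack the definition of non-jumping. Suppose $(M,R) \in \D$. By Proposition~\ref{propo:max_exists} (which applies since $\D$ is strongly first order), there exists $R' \supseteq R$ with $(M,R') \in \D_{\max}$. I claim this same $R'$ witnesses the non-jumping property. The first bullet is immediate. For the second bullet, I must show that for all $S$ with $R \subseteq S \subseteq R'$, we have $(M,S) \in \D$. But this is exactly where downwards closure does the work: since $(M,R') \in \D$ and $S \subseteq R'$, downwards closure gives $(M,S) \in \D$ directly.

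So the proof is essentially a one-liner once Proposition~\ref{propo:max_exists} is invoked: take $R'$ from that proposition, note $(M,R') \in \D_{\max}$ handles the first condition, and note that \emph{every} $S \subseteq R'$ (a fortiori every $S$ with $R \subseteq S \subseteq R'$) satisfies $(M,S) \in \D$ by downwards closure, which handles the second condition. There is essentially no obstacle here; the only subtlety is recognizing that strong first-orderness is needed precisely to guarantee that a maximal $R' \supseteq R$ exists at all (an arbitrary downwards closed dependency need not be non-jumping if $(M,R)$ lies below no maximal element, e.g. an infinite ascending chain of relations all in $\D$ with no upper bound in $\D$). Everything else is bookkeeping against the definitions. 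The write-up will simply state: let $(M,R) \in \D$; apply Proposition~\ref{propo:max_exists} to get $R' \supseteq R$ with $(M,R') \in \D_{\max}$; then for any $S$ with $R \subseteq S \subseteq R'$, since $S \subseteq R'$ and $(M,R') \in \D$, downwards closure yields $(M,S) \in \D$; hence $R'$ witnesses that $\D$ is non-jumping.
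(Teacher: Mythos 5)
Your proof is correct and follows exactly the same route as the paper's: invoke Proposition~\ref{propo:max_exists} to obtain $R' \supseteq R$ with $(M,R') \in \D_{\max}$, then apply downwards closure to conclude $(M,S) \in \D$ for every $S \subseteq R'$, in particular for every $S$ with $R \subseteq S \subseteq R'$. Nothing is missing.
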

\begin{proof}
Suppose that $(M, R) \in \D$. Then by Proposition \ref{propo:max_exists} there is some $R' \supseteq R$ such that $(M, R') \in \D_{\max}$. Then $(M, R') \in \D$, and since $\D$ is downwards closed this implies that $(M, S) \in \D$ for all $S \subseteq R'$ (and so, in particular, for all $S$ with $R \subseteq S \subseteq R'$). 
\end{proof}
\end{document}